\providecommand{\U}[1]{\protect\rule{.1in}{.1in}}
\newtheorem{theorem}{Theorem}
\theoremstyle{plain}
\newtheorem{corollary}{Corollary}
\numberwithin{equation}{section}
\begin{document}
\title[Canal Hypersurfaces]{Geometric Characterizations of Canal Hypersurfaces in Euclidean Spaces}
\subjclass[2010]{14J70, 53A07, 53A10.}
\keywords{Canal hypersurface, Tubular hypersurface, Mean curvature.}
\author[A. Kazan, M. Alt\i n and D.W. Yoon]{\bfseries Ahmet Kazan$^{1\ast}$, Mustafa Alt\i n$^{2}$ and Dae Won Yoon$^{3}$}
\address{$^{1}$Department of Computer Technologies, Do\u{g}an\c{s}ehir Vahap
K\"{u}\c{c}\"{u}k Vocational School, Malatya Turgut \"{O}zal University,
Malatya, Turkey\\
$^{2}$Technical Sciences Vocational School, Bing\"{o}l University, Bing\"{o}l,
Turkey \\
$^{3}$Department of Mathematics Education and RINS, Gyeongsang National
University, Jinju 52828, Republic of Korea\\
 \\
$^{\ast}$Corresponding author: ahmet.kazan@ozal.edu.tr}

\begin{abstract}
In the present paper, firstly we obtain the general expression of canal
hypersurfaces in Euclidean $n$-space and deal with canal hypersurfaces in
Euclidean 4-space $E^{4}$. We compute Gauss map, Gaussian curvature and mean
curvature of canal hypersurfaces in $E^{4}$ and obtain an important relation
between the mean and Gaussian curvatures as $3H\rho=K\rho^{3}-2$. We prove
that, the flat canal hypersurfaces in Euclidean 4-space are only circular
hypercylinders or circular hypercones and minimal canal hypersurfaces are only
generalized catenoids. Also, we state the expression of tubular hypersurfaces
in Euclidean spaces and give some results about Weingarten tubular
hypersurfaces in $E^{4}$.

\end{abstract}
\maketitle

%=====Contents=======

\section{Introduction}

A canal surface is formed by the envelope of the spheres whose centers lie on
a curve and radius vary depending on this curve \cite{Gray}. In this sense,
let $\Lambda:=\alpha(u)=(a(u),b(u),c(u))$ be a regular space curve and $r(u)$
be a $C^{1}$-function with $r>0$ and $\left\vert \dot{r}\right\vert
<\left\Vert \dot{\alpha}\right\Vert $. The envelope of the one parameter
family of spheres%
\begin{equation}
(x-\alpha(u))^{2}-r(u)^{2}=0 \label{canal}%
\end{equation}
is called a \textit{canal surface} and $\Lambda$ its \textit{directrix }in
Euclidean 3-space\textit{.} Also, the parametric representation of canal
surfaces can be given by%
\begin{equation}
\mathbf{x}=\mathbf{x}(u,v):=\alpha(u)-\frac{r(u)\dot{r}(u)}{\left\Vert
\dot{\alpha}(u)\right\Vert ^{2}}\dot{\alpha}(u)+\frac{r(u)\sqrt{\left\Vert
\dot{\alpha}(u)\right\Vert ^{2}-\dot{r}(u)^{2}}}{\left\Vert \dot{\alpha
}(u)\right\Vert }(e_{1}(u)\cos(v)+e_{2}(u)\sin(v)), \label{canal2}%
\end{equation}
where $\{e_{1},e_{2}\}$ is an orthonormal base orthogonal to tangent vector
$\dot{\alpha}$. In case of a constant radius function, the envelope is called
\textit{tubular} or \textit{pipe surface} (see \cite{Hartman}). Canal surfaces
(especially \textit{tubular} surfaces) have been applied to many fields, such
as the solid and the surface modeling for CAD/CAM, construction of blending
surfaces, shape re-construction and so on.

In this context, canal and tubular (hyper)surfaces have been studied by many
mathematicians in different spaces. For instance, the notion of special
conformally flat spaces which generalizes that of subprojective spaces has
been introduced in \cite{Chen} and the authors have proved that every canal
hypersurface of a Euclidean space is a special conformally flat space and it
is a subprojective space if and only if it is a surface of revolution. In
\cite{Izumiya}, a relationship between the caustics of a submanifold of
general dimension and of a canal hypersurface of the submanifold in Euclidean
space has been investigated and as a consequence, it has been seen that these
caustics are same. Analytic and algebraic properties of canal surfaces have
been studied in \cite{Xu}. In \cite{Aslan}, the authors have shown that canal
surfaces and tube surfaces can be obtained by the quaternion product and by
the matrix representation and also in \cite{Peter}, it is shown that any canal
surface to a rational spine curve and a rational radius function possesses
rational parametrizations. The principal curvatures and principal curvature
lines on canal surfaces have been determined in \cite{Garcia} and by means of
a connection of the differential equations for these curvature lines and real
Riccati equations, it has been established that canal surfaces have at most
two isolated periodic principal lines. Some interesting and important
relations about the Gaussian curvature, the mean curvature and the second
Gaussian curvature have been found and based on these relations, some canal
surfaces have been characterized in \cite{Kim}. Classification of cyclic
surfaces which is formed by movement of a circle of variable or constant
radius under any law in a three dimensional space and geometrical research of
canal surfaces have been given in \cite{Krivos}.

Furthermore, for different studies of canal and tubular surfaces in different
spaces such as Minkowskian, Galilean and pseudo-Galilean, we refer to
\cite{Karacan}, \cite{Karacan2}, \cite{Karacan3}, \cite{Kucuk}, \cite{Maekawa}%
, \cite{Ro}, \cite{Ucum} and etc.

In the second section of this paper, we obtain the general expression of canal
hypersurfaces in Euclidean $n$-space. In the third section, after recalling
some basic notions about hypersurfaces and stating the expression of canal
hypersurfaces in 4-dimensional Euclidean space, we obtain the Gaussian
curvature and the mean curvature of canal hypersurfaces in $E^{4}$ and give an
important relation between these curvatures. Moreover, we study on tubular
hypersurfaces in this section.

\section{Expression of Canal Hypersurfaces in Euclidean $n$-Space}

Let a center curve $\alpha:I\subseteq%
%TCIMACRO{\U{211d} }%
%BeginExpansion
\mathbb{R}
%EndExpansion
\rightarrow E^{n}$ be a curve with non-zero curvature and arc-length
parametrization. Then, the parametrization of the envelope of hypersphere
defining the canal hypersurface $X$ in $E^{n}$ can be given by%
\begin{equation}
X(\upsilon_{1},\upsilon_{2},\upsilon_{3},...,\upsilon_{n-1})-\alpha
(\upsilon_{1})=\sum_{i=1}^{n}a_{i}(\upsilon_{1},\upsilon_{2},\upsilon
_{3},...,\upsilon_{n-1})F_{i}(\upsilon_{1}),\text{ }i\in\left\{
1,2,3,...,n\right\}  ,\label{1}%
\end{equation}
where $F_{i}(\upsilon_{1})$ are Frenet vectors of $\alpha(\upsilon_{1})~$and
$a_{i}$ are differentiable functions of $\upsilon_{1},\upsilon_{2}%
,\upsilon_{3},...,\upsilon_{n-1}$ on the interval $I$. Furthermore, since
$X(\upsilon_{1},\upsilon_{2},\upsilon_{3},...,\upsilon_{n-1})$ lies on the
hypersphere, we have%
\begin{equation}
\left\langle X(\upsilon_{1},\upsilon_{2},\upsilon_{3},...,\upsilon
_{n-1})-\alpha(\upsilon_{1}),X(\upsilon_{1},\upsilon_{2},\upsilon
_{3},...,\upsilon_{n-1})-\alpha(\upsilon_{1})\right\rangle =\rho^{2}%
(\upsilon_{1})\label{2}%
\end{equation}
which leads to from (\ref{1}) that%
\begin{equation}
\sum_{i=1}^{n}\left(  a_{i}(\upsilon_{1},\upsilon_{2},\upsilon_{3}%
,...,\upsilon_{n-1})\right)  ^{2}=\rho^{2}(\upsilon_{1})\label{3}%
\end{equation}
and%
\begin{equation}
\sum_{i=1}^{n}a_{i}(\upsilon_{1},\upsilon_{2},\upsilon_{3},...,\upsilon
_{n-1})(a_{i}(\upsilon_{1},\upsilon_{2},\upsilon_{3},...,\upsilon
_{n-1}))_{\upsilon_{1}}=\rho(\upsilon_{1})\rho^{\prime}(\upsilon
_{1}).\label{4}%
\end{equation}
Here, $\rho(\upsilon_{1})$ is the radius function of hypersurface $X$ and we
note that, throughout this study, we state $\rho^{\prime}(\upsilon_{1}%
)=\frac{d\rho(\upsilon_{1})}{dv_{1}}$, $(a_{i}(\upsilon_{1},\upsilon
_{2},\upsilon_{3},...,\upsilon_{n-1}))_{v_{i}}=\frac{\partial a_{i}%
(\upsilon_{1},\upsilon_{2},\upsilon_{3},...,\upsilon_{n-1})}{\partial v_{i}}$,
$(X_{i}(\upsilon_{1},\upsilon_{2},\upsilon_{3},...,\upsilon_{n-1}))_{v_{i}%
}=\frac{\partial X_{i}(\upsilon_{1},\upsilon_{2},\upsilon_{3},...,\upsilon
_{n-1})}{\partial v_{i}},$ $i\in\left\{  1,2,...,n-1\right\}  $.

We know that \cite{Cheng}, the Frenet $n$-frame $F_{i}(\upsilon_{1})$ of the
curve $\alpha(\upsilon_{1})$ satisfy the relations%

\begin{equation}
\left.
\begin{array}
[c]{l}%
F_{1}^{\prime}(\upsilon_{1})=k_{1}(\upsilon_{1})F_{2}(\upsilon_{1}),\\
\\
F_{i}^{\prime}(\upsilon_{1})=-k_{i-1}(\upsilon_{1})F_{i-1}(\upsilon_{1}%
)+k_{i}(\upsilon_{1})F_{i+1}(\upsilon_{1}),\text{ \ }i\in\left\{
2,3,...,n-1\right\}  ,\\
\\
F_{n}^{\prime}(\upsilon_{1})=-k_{n-1}(\upsilon_{1})F_{n-1}(\upsilon_{1}),
\end{array}
\right\}  \label{frenet}%
\end{equation}
where $k_{i}$ are the $i$-th curvatures of the curve $\alpha(\upsilon_{1}).$

So, differentiating (\ref{1}) with respect to $\upsilon_{1}$ and using the
Frenet formula (\ref{frenet}), we get%
\begin{align}
(X(\upsilon_{1},\upsilon_{2},\upsilon_{3},...,\upsilon_{n-1}))_{\upsilon_{1}}
&  =F_{1}(\upsilon_{1})+\sum_{i=1}^{n}(a_{i}(\upsilon_{1},\upsilon
_{2},\upsilon_{3},...,\upsilon_{n-1}))_{\upsilon_{1}}F_{i}(\upsilon
_{1})\nonumber\\
&  +a_{1}(\upsilon_{1},\upsilon_{2},\upsilon_{3},...,\upsilon_{n-1}%
)k_{1}(\upsilon_{1})F_{2}(\upsilon_{1})\nonumber\\
&  +\sum_{i=2}^{n-1}a_{i}(\upsilon_{1},\upsilon_{2},\upsilon_{3}%
,...,\upsilon_{n-1})(-k_{i-1}(\upsilon_{1})F_{i-1}(\upsilon_{1})+k_{i}%
(\upsilon_{1})F_{i+1}(\upsilon_{1}))\nonumber\\
&  +a_{n}(\upsilon_{1},\upsilon_{2},\upsilon_{3},...,\upsilon_{n-1}%
)(-k_{n-1}(\upsilon_{1})F_{n-1}(\upsilon_{1})). \label{5}%
\end{align}
Furthermore, $X(\upsilon_{1},\upsilon_{2},\upsilon_{3},...,\upsilon
_{n-1})-\alpha(\upsilon_{1})$ is a normal vector to the canal hypersurfaces,
which implies that%
\begin{equation}
\left\langle X(\upsilon_{1},\upsilon_{2},\upsilon_{3},...,\upsilon
_{n-1})-\alpha(\upsilon_{1}),(X(\upsilon_{1},\upsilon_{2},\upsilon
_{3},...,\upsilon_{n-1}))_{v_{i}}\right\rangle =0,\text{ }i\in\left\{
1,2,3,...,n-1\right\}  . \label{6}%
\end{equation}
Then, taking $i=1$ in (\ref{6}), from (\ref{1}), (\ref{3}), (\ref{4}) and
(\ref{5}) we obtain%
\begin{equation}
\left.
\begin{array}
[c]{l}%
a_{1}(\upsilon_{1},\upsilon_{2},\upsilon_{3},...,\upsilon_{n-1})=-\rho
(\upsilon_{1})\rho^{\prime}(\upsilon_{1}),\\%
%TCIMACRO{\dsum \limits_{i=2}^{n}}%
%BeginExpansion
{\displaystyle\sum\limits_{i=2}^{n}}
%EndExpansion
\left(  a_{i}(\upsilon_{1},\upsilon_{2},\upsilon_{3},...,\upsilon
_{n-1})\right)  ^{2}=\rho^{2}(\upsilon_{1})(1-(\rho^{\prime}(\upsilon
_{1}))^{2}).
\end{array}
\right\}  \label{7}%
\end{equation}
From (\ref{7}), let us take%
\begin{equation}
\left.
\begin{array}
[c]{l}%
a_{2}(\upsilon_{1},\upsilon_{2},\upsilon_{3},...,\upsilon_{n-1})=\pm
\rho(\upsilon_{1})\sqrt{1-(\rho^{\prime}(\upsilon_{1}))^{2}}\prod
\limits_{k=2}^{n-1}\cos(x_{k}),\\
a_{i}(\upsilon_{1},\upsilon_{2},\upsilon_{3},...,\upsilon_{n-1})=\pm
\rho(\upsilon_{1})\sqrt{1-(\rho^{\prime}(\upsilon_{1}))^{2}}\sin
(\upsilon_{n+1-i})\prod\limits_{k=n+2-i}^{n-1}\cos(x_{k}),\\
a_{n}(\upsilon_{1},\upsilon_{2},\upsilon_{3},...,\upsilon_{n-1})=\pm
\rho(\upsilon_{1})\sqrt{1-(\rho^{\prime}(\upsilon_{1}))^{2}}\sin
(\upsilon_{n-1}),
\end{array}
\right\}  \label{8}%
\end{equation}
where $i\in\left\{  3,4...,n-1\right\}  .$

So, from (\ref{1}) and (\ref{8}), we have

\begin{theorem}
The canal hypersurface in Euclidean $n$-space is expressed as%
\begin{align}
&  X(\upsilon_{1},\upsilon_{2},\upsilon_{3},...,\upsilon_{n-1})=\alpha
(\upsilon_{1})-\rho(\upsilon_{1})\rho^{\prime}(\upsilon_{1})F_{1}(\upsilon
_{1})\nonumber\\
&  \text{ \ \ \ \ \ \ \ \ \ \ \ \ \ \ \ \ \ \ \ \ }\pm\rho(\upsilon_{1}%
)\sqrt{1-(\rho^{\prime}(\upsilon_{1}))^{2}}\left[
\begin{array}
[c]{l}%
\left(  \prod\limits_{k=2}^{n-1}\cos(\upsilon_{k})\right)  F_{2}(\upsilon
_{1})\\
+%
%TCIMACRO{\dsum \limits_{i=3}^{n-1}}%
%BeginExpansion
{\displaystyle\sum\limits_{i=3}^{n-1}}
%EndExpansion
\left(  \sin(\upsilon_{n+1-i})\prod\limits_{k=n+2-i}^{n-1}\cos(\upsilon
_{k})\right)  F_{i}(\upsilon_{1})\\
+\sin(\upsilon_{n-1})F_{n}(\upsilon_{1})
\end{array}
\right]  . \label{n-canal}%
\end{align}

\end{theorem}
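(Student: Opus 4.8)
The plan is to obtain \eqref{n-canal} by direct substitution of the explicit coefficient functions \eqref{8} into the defining relation \eqref{1}, once it is checked that the choice \eqref{8} is a genuine solution of the system \eqref{3}–\eqref{4}. Recall that \eqref{1} already records that $X-\alpha$ lies in the span of the Frenet $n$-frame, and that taking $i=1$ in the orthogonality relation \eqref{6} produced \eqref{7}, which pins down $a_{1}=-\rho\rho'$ and forces $\sum_{i=2}^{n}a_{i}^{2}=\rho^{2}(1-(\rho')^{2})$. Hence the only thing left to verify is that the functions listed in \eqref{8} satisfy this last constraint.

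After cancelling the common factor $\rho^{2}(1-(\rho')^{2})$, the required identity is the classical hyperspherical one,
\[
\Bigl(\prod_{k=2}^{n-1}\cos\upsilon_{k}\Bigr)^{2}
+\sum_{i=3}^{n-1}\Bigl(\sin\upsilon_{n+1-i}\prod_{k=n+2-i}^{n-1}\cos\upsilon_{k}\Bigr)^{2}
+\sin^{2}\upsilon_{n-1}=1 ,
\]
which I would establish by induction on $n$. The point is that every term except $\sin^{2}\upsilon_{n-1}$ carries a factor $\cos^{2}\upsilon_{n-1}$; pulling it out leaves $\sin^{2}\upsilon_{n-1}+\cos^{2}\upsilon_{n-1}\cdot S$, where $S$ is exactly the analogous sum with one fewer cosine, so $S=1$ by the inductive hypothesis and the whole expression collapses to $\sin^{2}\upsilon_{n-1}+\cos^{2}\upsilon_{n-1}=1$; the base case is $\cos^{2}\upsilon_{2}+\sin^{2}\upsilon_{2}=1$. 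The same computation also shows that \eqref{8} is consistent with all of the relations \eqref{6}, not merely the one for $i=1$: since $a_{1}=-\rho\rho'$ depends on $\upsilon_{1}$ alone, for each $i\in\{2,\dots,n-1\}$ one has $\langle X-\alpha,X_{\upsilon_{i}}\rangle=\sum_{j=2}^{n}a_{j}(a_{j})_{\upsilon_{i}}=\tfrac12\,\partial_{\upsilon_{i}}\bigl(\rho^{2}(1-(\rho')^{2})\bigr)=0$, because the right-hand side is a function of $\upsilon_{1}$ only. Thus \eqref{8} indeed parametrizes the envelope of the hyperspheres.

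With these checks in hand, substituting $a_{1}=-\rho\rho'$ together with the expressions \eqref{8} for $a_{2},\dots,a_{n}$ into \eqref{1}, and collecting the terms that share the common factor $\pm\rho\sqrt{1-(\rho')^{2}}$, produces precisely formula \eqref{n-canal}. I expect the only mildly delicate point to be keeping the index shifts in the products $\prod_{k=n+2-i}^{n-1}\cos\upsilon_{k}$ straight so that the telescoping in the hyperspherical identity is carried out against the correct summand; everything else in the argument is purely formal.
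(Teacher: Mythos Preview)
Your proposal is correct and follows essentially the same route as the paper: the paper derives the constraints \eqref{7} from \eqref{1}--\eqref{6}, then simply declares the hyperspherical choice \eqref{8} and substitutes into \eqref{1} to obtain \eqref{n-canal}. Your version is in fact more complete than the paper's, since you actually verify (by the telescoping induction) that \eqref{8} satisfies the constraint $\sum_{i=2}^{n}a_{i}^{2}=\rho^{2}(1-(\rho')^{2})$ and check the remaining orthogonality conditions \eqref{6} for $i\ge 2$, whereas the paper leaves both of these implicit.
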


\section{Canal Hypersurfaces in $E^{4}$}

In this section, we study canal hypersurfaces in Euclidean 4-space $E^{4}$ by
giving their expressions with the aid of (\ref{n-canal}).

Since we will deal with canal hypersurface in $E^{4}$ and give some important
characterizations about them, let we recall some fundamental notions for
hypersurfaces in $E^{4}$.

If $\overrightarrow{u}=(u_{1},u_{2},u_{3},u_{4})$, $\overrightarrow{v}%
=(v_{1},v_{2},v_{3},v_{4})$ and $\overrightarrow{w}=(w_{1},w_{2},w_{3},w_{4})$
are three vectors in $E^{4}$, then the inner product and vector product are
defined by
\begin{equation}
\left\langle \overrightarrow{u},\overrightarrow{v}\right\rangle =u_{1}%
v_{1}+u_{2}v_{2}+u_{3}v_{3}+u_{4}v_{4} \label{yy1}%
\end{equation}
and%
\begin{equation}
\overrightarrow{u}\times\overrightarrow{v}\times\overrightarrow{w}=\det\left[
\begin{array}
[c]{cccc}%
e_{1} & e_{2} & e_{3} & e_{4}\\
u_{1} & u_{2} & u_{3} & u_{4}\\
v_{1} & v_{2} & v_{3} & v_{4}\\
w_{1} & w_{2} & w_{3} & w_{4}%
\end{array}
\right]  , \label{yy2}%
\end{equation}
respectively. Also, the norm of the vector $\overrightarrow{u}$ is $\left\Vert
\overrightarrow{u}\right\Vert =\sqrt{\left\langle \overrightarrow{u}%
,\overrightarrow{u}\right\rangle }.$

If
\begin{align}
\Psi:U\subset E^{3}  &  \longrightarrow E^{4}\label{yy3}\\
(\upsilon_{1},\upsilon_{2},\upsilon_{3})  &  \longrightarrow\Psi(\upsilon
_{1},\upsilon_{2},\upsilon_{3})=(\Psi_{1}(\upsilon_{1},\upsilon_{2}%
,\upsilon_{3}),\Psi_{2}(\upsilon_{1},\upsilon_{2},\upsilon_{3}),\Psi
_{3}(\upsilon_{1},\upsilon_{2},\upsilon_{3}),\Psi_{4}(\upsilon_{1}%
,\upsilon_{2},\upsilon_{3}))\nonumber
\end{align}
is a hypersurface in $E^{4}$, then the unit normal vector field, the matrix
forms of the first and second fundamental forms are%

\begin{equation}
N_{\Psi}=\frac{\Psi_{\upsilon_{1}}\times\Psi_{\upsilon_{2}}\times
\Psi_{\upsilon_{3}}}{\left\Vert \Psi_{\upsilon_{1}}\times\Psi_{\upsilon_{2}%
}\times\Psi_{\upsilon_{3}}\right\Vert }, \label{4y}%
\end{equation}%
\begin{equation}
\lbrack g_{ij}]=\left[
\begin{array}
[c]{ccc}%
g_{11} & g_{12} & g_{13}\\
g_{21} & g_{22} & g_{23}\\
g_{31} & g_{32} & g_{33}%
\end{array}
\right]  \label{5y}%
\end{equation}
and%
\begin{equation}
\lbrack h_{ij}]=\left[
\begin{array}
[c]{ccc}%
h_{11} & h_{12} & h_{13}\\
h_{21} & h_{22} & h_{23}\\
h_{31} & h_{32} & h_{33}%
\end{array}
\right]  , \label{6y}%
\end{equation}
respectively. Here $g_{ij}=\left\langle \Psi_{\upsilon_{i}},\Psi_{\upsilon
_{j}}\right\rangle ,$ $h_{ij}=\left\langle \Psi_{\upsilon_{i}\upsilon_{j}%
},N_{\Psi}\right\rangle ,$ $\Psi_{\upsilon_{i}}=\frac{\partial\Psi
(\upsilon_{1},\upsilon_{2},\upsilon_{3})}{\partial\upsilon_{i}},$
$\Psi_{\upsilon_{i}\upsilon_{j}}=\frac{\partial^{2}\Psi(\upsilon_{1}%
,\upsilon_{2},\upsilon_{3})}{\partial\upsilon_{i}\upsilon_{j}},$
$i,j\in\{1,2,3\}.$ Also, the shape operator of the hypersurface (\ref{yy3}) is%
\begin{equation}
S=[a_{ij}]=[g^{ij}][h_{ij}], \label{7y}%
\end{equation}
where $[g^{ij}]$ is the inverse matrix of $[g_{ij}]$.

With the aid of (\ref{4y})-(\ref{7y}), the Gaussian and mean curvatures of a
hypersurface in $E^{4}$ are given by%
\begin{equation}
K=\det(S)=\frac{\det[h_{ij}]}{\det[g_{ij}]} \label{yy4}%
\end{equation}
and%
\begin{equation}
3H=tr(S), \label{yy5}%
\end{equation}
respectively \cite{Guler}.

From (\ref{n-canal}), the canal hypersurface $\emph{C}$ in $E^{4}$ can be
written as%
\begin{equation}
\emph{C}(\upsilon_{1},\upsilon_{2},\upsilon_{3})=\alpha(\upsilon_{1})-\rho
\rho^{\prime}F_{1}(\upsilon_{1})\pm\rho\sqrt{1-\rho^{\prime}{}^{2}}\left[
\cos\upsilon_{2}\cos\upsilon_{3}F_{2}(\upsilon_{1})+\sin\upsilon_{2}%
\cos\upsilon_{3}F_{3}(\upsilon_{1})+\sin\upsilon_{3}F_{4}(\upsilon
_{1})\right]  , \label{4-canal}%
\end{equation}
where $\upsilon_{1}\in\left[  0,l\right]  $ and$~\upsilon_{2},\upsilon_{3}%
\in\left[  0,2\pi\right)  $. Also, from now on we state $\alpha=\alpha
(\upsilon_{1}),$ $\rho=\rho(\upsilon_{1}),$ $F_{i}=F_{i}(\upsilon_{1}),$
$i\in\{1,2,3,4\},$ $\rho^{\prime}=\frac{d\rho(\upsilon_{1})}{d\upsilon_{1}}$
and we will consider the "$\pm$" as "$+$".

Firstly, from (\ref{frenet}) and (\ref{4-canal}) the derivatives of the canal
hypersurface (\ref{4-canal}) are obtained as%
\begin{align}
\emph{C}_{\upsilon_{1}}  &  =\emph{C}_{\upsilon_{1}}^{\text{ }1}F_{1}%
+\emph{C}_{\upsilon_{1}}^{\text{ }2}F_{2}+\emph{C}_{\upsilon_{1}}^{\text{ }%
3}F_{3}+\emph{C}_{\upsilon_{1}}^{\text{ }4}F_{4},\label{9}\\
\emph{C}_{\upsilon_{2}}  &  =-\rho\sqrt{1-\rho^{\prime}{}^{2}}\sin\upsilon
_{2}\cos\upsilon_{3}F_{2}+\rho\sqrt{1-\rho^{\prime}{}^{2}}\cos\upsilon_{2}%
\cos\upsilon_{3}F_{3},\label{10}\\
\emph{C}_{\upsilon_{3}}  &  =-\rho\sqrt{1-\rho^{\prime}{}^{2}}\cos\upsilon
_{2}\sin\upsilon_{3}F_{2}-\rho\sqrt{1-\rho^{\prime}{}^{2}}\sin\upsilon_{2}%
\sin\upsilon_{3}F_{3}+\rho\sqrt{1-\rho^{\prime}{}^{2}}\cos\upsilon_{3}F_{4},
\label{11}%
\end{align}
where%
\[
\left.
\begin{array}
[c]{l}%
\emph{C}_{\upsilon_{1}}^{\text{ }1}=1-\rho^{\prime}{}^{2}-k_{1}\rho
\sqrt{1-\rho^{\prime}{}^{2}}\cos\upsilon_{2}\cos\upsilon_{3}-\rho\rho
^{\prime\prime},\\
\\
\emph{C}_{\upsilon_{1}}^{\text{ }2}=-k_{1}\rho\rho^{\prime}-k_{2}\rho
\sqrt{1-\rho^{\prime}{}^{2}}\sin\upsilon_{2}\cos\upsilon_{3}+\left(
\rho^{\prime}\sqrt{1-\rho^{\prime}{}^{2}}-\frac{\rho\rho^{\prime}\rho
^{\prime\prime}}{\sqrt{1-\rho^{\prime}{}^{2}}}\right)  \cos\upsilon_{2}%
\cos\upsilon_{3},\\
\emph{C}_{\upsilon_{1}}^{\text{ }3}=\rho\sqrt{1-\rho^{\prime}{}^{2}}\left(
k_{2}\cos\upsilon_{2}\cos\upsilon_{3}-k_{3}\sin\upsilon_{3}\right)  +\left(
\rho^{\prime}\sqrt{1-\rho^{\prime}{}^{2}}-\frac{\rho\rho^{\prime}\rho
^{\prime\prime}}{\sqrt{1-\rho^{\prime}{}^{2}}}\right)  \sin\upsilon_{2}%
\cos\upsilon_{3},\\
\emph{C}_{\upsilon_{1}}^{\text{ }4}=k_{3}\rho\sqrt{1-\rho^{\prime}{}^{2}}%
\sin\upsilon_{2}\cos\upsilon_{3}+\left(  \rho^{\prime}\sqrt{1-\rho^{\prime}%
{}^{2}}-\frac{\rho\rho^{\prime}\rho^{\prime\prime}}{\sqrt{1-\rho^{\prime}%
{}^{2}}}\right)  \sin\upsilon_{3}.
\end{array}
\right\}
\]
From (\ref{4y}) and (\ref{9})-(\ref{11}), the unit normal vector field of
$\emph{C}$ in $E^{4}$ is%
\begin{equation}
N=-\rho^{\prime}F_{1}+\sqrt{1-\rho^{\prime}{}^{2}}\cos\upsilon_{2}\cos
\upsilon_{3}F_{2}+\sqrt{1-\rho^{\prime}{}^{2}}\sin\upsilon_{2}\cos\upsilon
_{3}F_{3}+\sqrt{1-\rho^{\prime}{}^{2}}\sin\upsilon_{3}F_{4}. \label{12}%
\end{equation}
Also, the coefficients of the first fundamental form are given by%
\begin{equation}
\left.
\begin{array}
[c]{l}%
g_{11}=\frac{1}{1-\rho^{\prime2}}\left(
\begin{array}
[c]{l}%
\left(  1-\rho^{\prime}{}^{2}\right)  Q{}^{2}+\left(
\begin{array}
[c]{l}%
k_{2}\rho\left(  1-\rho^{\prime}{}^{2}\right)  \sin\upsilon_{2}\cos
\upsilon_{3}+k_{1}\rho\rho^{\prime}\sqrt{1-\rho^{\prime2}}\\
+\rho^{\prime}\left(  \rho^{\prime2}+\rho\rho^{\prime\prime}-1\right)
\cos\upsilon_{2}\cos\upsilon_{3}%
\end{array}
\right)  ^{2}{}\\
+\left(
\begin{array}
[c]{l}%
-k_{2}\rho\left(  1-\rho^{\prime}{}^{2}\right)  \cos\upsilon_{2}\cos
\upsilon_{3}+k_{3}\rho\left(  1-\rho^{\prime}{}^{2}\right)  \sin\upsilon_{3}\\
+\rho^{\prime}\left(  \rho^{\prime2}+\rho\rho^{\prime\prime}-1\right)
\sin\upsilon_{2}\cos\upsilon_{3}%
\end{array}
\right)  ^{2}\\
+\left(  \rho^{\prime}\left(  \rho^{\prime2}+\rho\rho^{\prime\prime}-1\right)
\sin\upsilon_{3}-k_{3}\rho\left(  1-\rho^{\prime}{}^{2}\right)  \sin
\upsilon_{2}\cos\upsilon_{3}\right)  {}^{2}%
\end{array}
\right)  ,\\
\\
g_{12}=g_{21}=\rho^{2}\left(  k_{1}\rho^{\prime}\sqrt{1-\rho^{\prime}{}^{2}%
}\sin\upsilon_{2}+k_{2}(1-\rho^{\prime}{}^{2})\cos\upsilon_{3}-k_{3}%
(1-\rho^{\prime}{}^{2})\cos\upsilon_{2}\sin\upsilon_{3}\right)  \cos
\upsilon_{3},\\
\\
g_{13}=g_{31}=\rho^{2}\left(  k_{1}\rho^{\prime}\sqrt{1-\rho^{\prime}{}^{2}%
}\cos\upsilon_{2}\sin\upsilon_{3}+k_{3}(1-\rho^{\prime}{}^{2})\sin\upsilon
_{2}\right)  ,\\
\\
g_{22}=\rho^{2}(1-\rho^{\prime}{}^{2})\cos^{2}\upsilon_{3},\\
\\
g_{23}=g_{32}=0,\\
\\
g_{33}=\rho^{2}(1-\rho^{\prime}{}^{2}),
\end{array}
\right\}  \label{13}%
\end{equation}
where $Q=\rho\left(  k_{1}\sqrt{1-\rho^{\prime}{}^{2}}\cos\upsilon_{2}%
\cos\upsilon_{3}+\rho^{\prime\prime}\right)  -1+\rho^{\prime}{}^{2}$ and it
follows that%
\begin{equation}
\det[g_{ij}]=\rho^{4}(1-\rho^{\prime}{}^{2})Q^{2}\cos^{2}\upsilon_{3}.
\label{14}%
\end{equation}

Now, for obtaining the coefficients of the second fundamental form, let we
give the second derivatives $\emph{C}_{\upsilon_{i}\upsilon_{j}}%
=\frac{\partial^{2}\emph{C}}{\partial\upsilon_{i}\upsilon_{j}}$ of the canal
hypersurface (\ref{4-canal}):%
\begin{equation}
\emph{C}_{\upsilon_{1}\upsilon_{1}}=\emph{C}_{\upsilon_{1}\upsilon_{1}%
}^{\text{ }1}F_{1}+\emph{C}_{\upsilon_{1}\upsilon_{1}}^{\text{ }2}%
F_{2}+\emph{C}_{\upsilon_{1}\upsilon_{1}}^{\text{ }3}F_{3}+\emph{C}%
_{\upsilon_{1}\upsilon_{1}}^{\text{ }4}F_{4}, \label{15}%
\end{equation}%
\begin{align}
\emph{C}_{\upsilon_{1}\upsilon_{2}}  &  =\emph{C}_{\upsilon_{2}\upsilon_{1}%
}\nonumber\\
&  =k_{1}\rho\sqrt{1-\rho^{\prime}{}^{2}}\sin\upsilon_{2}\cos\upsilon_{3}%
F_{1}\nonumber\\
&  +\left(  \frac{\cos\upsilon_{3}}{\sqrt{1-\rho^{\prime}{}^{2}}}\left(
-\rho^{\prime}(1-\rho^{\prime}{}^{2})\sin\upsilon_{2}+\rho(-k_{2}%
(1-\rho^{\prime}{}^{2})\cos\upsilon_{2}+\rho^{\prime}\rho^{\prime\prime}%
\sin\upsilon_{2})\right)  \right)  F_{2}\nonumber\\
&  +\left(  \frac{\cos\upsilon_{3}}{\sqrt{1-\rho^{\prime}{}^{2}}}\left(
\rho^{\prime}(1-\rho^{\prime}{}^{2})\cos\upsilon_{2}+\rho(-k_{2}%
(1-\rho^{\prime}{}^{2})\sin\upsilon_{2}-\rho^{\prime}\rho^{\prime\prime}%
\cos\upsilon_{2})\right)  \right)  F_{3}\nonumber\\
&  +k_{3}\rho\sqrt{1-\rho^{\prime}{}^{2}}\cos\upsilon_{2}\cos\upsilon_{3}%
F_{4}, \label{16}%
\end{align}%
\begin{align}
\emph{C}_{\upsilon_{1}\upsilon_{3}}  &  =\emph{C}_{\upsilon_{3}\upsilon_{1}%
}\nonumber\\
&  =k_{1}\rho\sqrt{1-\rho^{\prime}{}^{2}}\cos\upsilon_{2}\sin\upsilon_{3}%
F_{1}\nonumber\\
&  +\left(  \frac{\sin\upsilon_{3}}{\sqrt{1-\rho^{\prime}{}^{2}}}\left(
-\rho^{\prime}(1-\rho^{\prime}{}^{2})\cos\upsilon_{2}+\rho(k_{2}%
(1-\rho^{\prime}{}^{2})\sin\upsilon_{2}+\rho^{\prime}\rho^{\prime\prime}%
\cos\upsilon_{2})\right)  \right)  F_{2}\nonumber\\
&  +\left(  \frac{1}{\sqrt{1-\rho^{\prime}{}^{2}}}\left(
\begin{array}
[c]{l}%
-\rho^{\prime}(1-\rho^{\prime}{}^{2})\sin\upsilon_{2}\sin\upsilon_{3}\\
+\rho\left(
\begin{array}
[c]{l}%
-k_{2}(1-\rho^{\prime}{}^{2})\cos\upsilon_{2}\sin\upsilon_{3}\\
-k_{3}(1-\rho^{\prime}{}^{2})\cos\upsilon_{3}+\rho^{\prime}\rho^{\prime\prime
}\sin\upsilon_{2}\sin\upsilon_{3}%
\end{array}
\right)
\end{array}
\right)  \right)  F_{3}\nonumber\\
&  +\left(  \frac{1}{\sqrt{1-\rho^{\prime}{}^{2}}}\left(
\begin{array}
[c]{l}%
\rho^{\prime}(1-\rho^{\prime}{}^{2})\cos\upsilon_{3}\\
+\rho(-k_{3}(1-\rho^{\prime}{}^{2})\sin\upsilon_{2}\sin\upsilon_{3}%
-\rho^{\prime}\rho^{\prime\prime}\cos\upsilon_{3})
\end{array}
\right)  \right)  F_{4}, \label{17}%
\end{align}%
\begin{equation}
\emph{C}_{\upsilon_{2}\upsilon_{2}}=-\rho\sqrt{1-\rho^{\prime}{}^{2}}%
\cos\upsilon_{2}\cos\upsilon_{3}F_{2}-\rho\sqrt{1-\rho^{\prime}{}^{2}}%
\sin\upsilon_{2}\cos\upsilon_{3}F_{3}, \label{18}%
\end{equation}%
\begin{equation}
\emph{C}_{\upsilon_{2}\upsilon_{3}}=\emph{C}_{\upsilon_{3}\upsilon_{2}}%
=\rho\sqrt{1-\rho^{\prime}{}^{2}}\sin\upsilon_{2}\sin\upsilon_{3}F_{2}%
-\rho\sqrt{1-\rho^{\prime}{}^{2}}\cos\upsilon_{2}\sin\upsilon_{3}F_{3}
\label{19}%
\end{equation}
and
\begin{equation}
\emph{C}_{\upsilon_{3}\upsilon_{3}}=-\rho\sqrt{1-\rho^{\prime}{}^{2}}%
\cos\upsilon_{2}\cos\upsilon_{3}F_{2}-\rho\sqrt{1-\rho^{\prime}{}^{2}}%
\sin\upsilon_{2}\cos\upsilon_{3}F_{3}-\rho\sqrt{1-\rho^{\prime}{}^{2}}%
\sin\upsilon_{3}F_{4}, \label{20}%
\end{equation}
where%
\[%
\begin{array}
[c]{l}%
\emph{C}_{\upsilon_{1}\upsilon_{1}}^{\text{ }1}=\frac{1}{\sqrt{1-\rho^{\prime
}{}^{2}}}\left(
\begin{array}
[c]{l}%
\rho^{\prime}(-2k_{1}(1-\rho^{\prime}{}^{2})\cos\upsilon_{2}\cos\upsilon
_{3}-3\rho^{\prime\prime}\sqrt{1-\rho^{\prime}{}^{2}})\\
+\rho\left(
\begin{array}
[c]{l}%
(k_{1})^{2}\rho^{\prime}\sqrt{1-\rho^{\prime}{}^{2}}-k_{1}^{\prime}%
(1-\rho^{\prime}{}^{2})\cos\upsilon_{2}\cos\upsilon_{3}\\
+k_{1}(k_{2}(1-\rho^{\prime}{}^{2})\sin\upsilon_{2}+2\rho^{\prime}\rho
^{\prime\prime}\cos\upsilon_{2})\cos\upsilon_{3}-\rho^{\prime\prime\prime
}\sqrt{1-\rho^{\prime}{}^{2}}%
\end{array}
\right)
\end{array}
\right)  ,\\
\\
\emph{C}_{\upsilon_{1}\upsilon_{1}}^{\text{ }2}=\frac{-1}{(1-\rho^{\prime}%
{}^{2})^{\frac{3}{2}}}\left(
\begin{array}
[c]{l}%
-(1-\rho^{\prime}{}^{2})\left(
\begin{array}
[c]{l}%
k_{1}(1-2\rho^{\prime2})\sqrt{1-\rho^{\prime}{}^{2}}-2k_{2}\rho^{\prime
}(1-\rho^{\prime}{}^{2})\sin\upsilon_{2}\cos\upsilon_{3}\\
+\rho^{\prime\prime}(1-3\rho^{\prime}{}^{2})\cos\upsilon_{2}\cos\upsilon_{3}%
\end{array}
\right) \\
+\rho\left(
\begin{array}
[c]{l}%
(1-\rho^{\prime}{}^{2})^{2}((k_{1})^{2}+(k_{2})^{2})\cos\upsilon_{2}%
\cos\upsilon_{3}\\
+k_{1}^{\prime}\rho^{\prime}(1-\rho^{\prime}{}^{2})^{\frac{3}{2}}+k_{2}%
{}^{\prime}(1-\rho^{\prime}{}^{2})^{2}\sin\upsilon_{2}\cos\upsilon_{3}\\
+2k_{1}\rho^{\prime\prime}(1-\rho^{\prime}{}^{2})^{\frac{3}{2}}+(\rho
^{\prime\prime2}+\rho^{\prime}\rho^{\prime\prime\prime}(1-\rho^{\prime}{}%
^{2}))\cos\upsilon_{2}\cos\upsilon_{3}\\
+k_{2}(1-\rho^{\prime}{}^{2})(-k_{3}(1-\rho^{\prime}{}^{2})\sin\upsilon
_{3}-2\rho^{\prime}\rho^{\prime\prime}\sin\upsilon_{2}\cos\upsilon_{3})
\end{array}
\right)
\end{array}
\right)  ,
\end{array}
\]%
\[%
\begin{array}
[c]{l}%
\emph{C}_{\upsilon_{1}\upsilon_{1}}^{\text{ }3}=\frac{1}{(1-\rho^{\prime}%
{}^{2})^{\frac{3}{2}}}\left(
\begin{array}
[c]{l}%
(1-\rho^{\prime}{}^{2})\left(
\begin{array}
[c]{l}%
2\rho^{\prime}(1-\rho^{\prime}{}^{2})(k_{2}\cos\upsilon_{2}\cos\upsilon
_{3}-k_{3}\sin\upsilon_{3})\\
+\rho^{\prime\prime}(1-3\rho^{\prime}{}^{2})\sin\upsilon_{2}\cos\upsilon_{3}%
\end{array}
\right) \\
-\rho\left(
\begin{array}
[c]{l}%
(1-\rho^{\prime}{}^{2})^{2}((k_{2}^{2}+k_{3}^{2})\sin\upsilon_{2}%
-k_{2}^{\prime}\cos\upsilon_{2})\cos\upsilon_{3}\\
+k_{3}^{\prime}(1-\rho^{\prime}{}^{2})^{2}\sin v_{3}+2k_{3}\rho^{\prime}%
\rho^{\prime\prime}(-1+\rho^{\prime}{}^{2})\sin v_{3}\\
-k_{2}\rho^{\prime}(-1+\rho^{\prime}{}^{2})\left(  k_{1}\sqrt{1-\rho^{\prime
2}}+2\rho^{\prime\prime}\cos v_{2}\cos v_{3}\right) \\
+\sin v_{2}\cos v_{3}\left(  \rho^{\prime}\rho^{\prime\prime\prime}%
(1-\rho^{\prime2})+\rho^{\prime\prime2}\right)
\end{array}
\right)
\end{array}
\right)  ,\\
\\
\emph{C}_{\upsilon_{1}\upsilon_{1}}^{\text{ }4}=\frac{1}{(1-\rho^{\prime}%
{}^{2})^{\frac{3}{2}}}\left(
\begin{array}
[c]{l}%
(1-\rho^{\prime}{}^{2})(2\rho^{\prime}k_{3}(1-\rho^{\prime}{}^{2})\sin
\upsilon_{2}\cos\upsilon_{3}+\rho^{\prime\prime}(1-3\rho^{\prime}{}^{2}%
)\sin\upsilon_{3})\\
-\rho^{\prime}\left(
\begin{array}
[c]{l}%
k_{3}(1-\rho^{\prime2})^{2}(-k_{2}\cos\upsilon_{2}\cos\upsilon_{3}+k_{3}%
\sin\upsilon_{3})\\
-(1-\rho^{\prime2})((1-\rho^{\prime2})k_{3}^{\prime}\sin\upsilon_{2}%
\cos\upsilon_{3}+2k_{3}\rho^{\prime}\rho^{\prime\prime})\\
+(\rho^{\prime\prime2}-\rho^{\prime}\rho^{\prime\prime\prime}(1-\rho^{\prime
2}))\sin\upsilon_{3}%
\end{array}
\right)
\end{array}
\right)  .
\end{array}
\]
Thus, from (\ref{6y}), (\ref{12}) and (\ref{15})-(\ref{20}), the coefficients
of the second fundamental form are given by%
\begin{equation}
\left.
\begin{array}
[c]{l}%
h_{11}=\frac{\rho}{\rho^{\prime2}-1}\left(
\begin{array}
[c]{l}%
\left(  (k_{2})^{2}\cos^{2}v_{3}-k_{2}k_{3}\cos v_{2}\sin(2v_{3})+(k_{3}%
)^{2}\left(  \cos^{2}v_{3}\sin^{2}v_{2}+\sin^{2}v_{3}\right)  \right)
(1-\rho^{\prime}{}^{2})^{2}\\
+(k_{1})^{2}(1-\rho^{\prime}{}^{2})\left(  (1-\rho^{\prime}{}^{2})\cos
^{2}v_{2}\cos^{2}v_{3}+\rho^{\prime}{}^{2}\right)  +\rho^{\prime\prime2}\\
+2k_{1}\sqrt{1-\rho^{\prime}{}^{2}}(k_{2}\rho^{\prime}(1-\rho^{\prime}{}%
^{2})\sin v_{2}+\rho^{\prime\prime}\cos v_{2})\cos v_{3}%
\end{array}
\right) \\
\text{ \ \ \ \ \ }+k_{1}\sqrt{1-\rho^{\prime2}}\cos\upsilon_{2}\cos
\upsilon_{3}+\rho^{\prime\prime},\\
\\
h_{12}=h_{21}=\rho\left(  -k_{1}\rho^{\prime}\sqrt{1-\rho^{\prime}{}^{2}}%
\sin\upsilon_{2}+(1-\rho^{\prime}{}^{2})(k_{3}\cos\upsilon_{2}\sin\upsilon
_{3}-k_{2}\cos\upsilon_{3})\right)  \cos\upsilon_{3},\\
\\
h_{13}=h_{31}=\rho\left(  -k_{1}\rho^{\prime}\sqrt{1-\rho^{\prime}{}^{2}}%
\cos\upsilon_{2}\sin\upsilon_{3}-k_{3}(1-\rho^{\prime}{}^{2})\sin\upsilon
_{2}\right)  ,\\
\\
h_{22}=-\rho(1-\rho^{\prime2})\cos^{2}\upsilon_{3},\\
\\
h_{23}=h_{32}=0,\\
\\
h_{33}=-\rho(1-\rho^{\prime2})
\end{array}
\right\}  \label{21}%
\end{equation}
and it implies%
\begin{equation}
\det[h_{ij}]=\rho^{2}(1-\rho^{\prime2})\left(
\begin{array}
[c]{l}%
(1-\rho^{\prime2})(k_{1}\sqrt{1-\rho^{\prime}{}^{2}}\cos\upsilon_{2}%
\cos\upsilon_{3}+\rho^{\prime\prime})\\
-\rho\left(
\begin{array}
[c]{l}%
(k_{1})^{2}(1-\rho^{\prime2})\cos^{2}\upsilon_{2}\cos^{2}\upsilon_{3}\\
+2k_{1}\rho^{\prime\prime}\sqrt{1-\rho^{\prime}{}^{2}}\cos\upsilon_{2}%
\cos\upsilon_{3}+\rho^{\prime\prime2}%
\end{array}
\right)
\end{array}
\right)  \cos^{2}\upsilon_{3}. \label{22}%
\end{equation}
So, from (\ref{yy4}), (\ref{14}) and (\ref{22}), we have

\begin{theorem}
The Gaussian curvature of the canal hypersurface (\ref{4-canal}) in Euclidean
4-space is%
\begin{equation}
K=\frac{\left(  (1-\rho^{\prime}{}^{2})(k_{1}\sqrt{1-\rho^{\prime}{}^{2}}%
\cos\upsilon_{2}\cos\upsilon_{3}+\rho^{\prime\prime})-\rho\left(
\begin{array}
[c]{l}%
(k_{1})^{2}(1-\rho^{\prime2})\cos^{2}\upsilon_{2}\cos^{2}\upsilon_{3}%
+\rho^{\prime\prime2}\\
+2k_{1}\rho^{\prime\prime}\sqrt{1-\rho^{\prime}{}^{2}}\cos\upsilon_{2}%
\cos\upsilon_{3}%
\end{array}
\right)  \right)  }{\rho^{2}\left(  \rho\left(  k_{1}\sqrt{1-\rho^{\prime}%
{}^{2}}\cos\upsilon_{2}\cos\upsilon_{3}+\rho^{\prime\prime}\right)
-1+\rho^{\prime}{}^{2}\right)  ^{2}}. \label{gauss}%
\end{equation}

\end{theorem}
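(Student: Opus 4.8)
The plan is to read $K$ directly off its definition (\ref{yy4}), $K=\det[h_{ij}]/\det[g_{ij}]$, using the two determinants that have already been recorded in (\ref{14}) and (\ref{22}). With $Q$ as introduced just after (\ref{13}), equation (\ref{14}) says $\det[g_{ij}]=\rho^{4}(1-\rho^{\prime}{}^{2})Q^{2}\cos^{2}\upsilon_{3}$, while (\ref{22}) has the form $\det[h_{ij}]=\rho^{2}(1-\rho^{\prime}{}^{2})\,P\,\cos^{2}\upsilon_{3}$ with $P$ the long bracketed factor displayed in (\ref{22}). The defining conditions of a canal hypersurface force $\rho>0$ and $1-\rho^{\prime}{}^{2}>0$, and $\cos\upsilon_{3}\neq 0$ off a set of measure zero, so the common factor $\rho^{2}(1-\rho^{\prime}{}^{2})\cos^{2}\upsilon_{3}$ cancels in the quotient and leaves $K=P/(\rho^{2}Q^{2})$, which is exactly (\ref{gauss}). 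So at the level of the theorem statement there is nothing to do beyond this cancellation; the real work is in establishing (\ref{14}) and (\ref{22}), which I would set up as follows.

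For $\det[g_{ij}]$, rather than expand the $3\times 3$ Gram matrix whose entry $g_{11}$ in (\ref{13}) is cumbersome, I would use the identity $\det[g_{ij}]=\left\Vert \emph{C}_{\upsilon_{1}}\times\emph{C}_{\upsilon_{2}}\times\emph{C}_{\upsilon_{3}}\right\Vert^{2}$ valid in $E^{4}$. Differentiating (\ref{4-canal}) through the Frenet relations (\ref{frenet}) gives (\ref{9})--(\ref{11}); from these, $\emph{C}_{\upsilon_{2}}$ and $\emph{C}_{\upsilon_{3}}$ are tangent to the cross-sectional $2$-sphere of radius $\rho\sqrt{1-\rho^{\prime}{}^{2}}$ carried along $\alpha$, are mutually orthogonal (so $g_{23}=0$), and satisfy $\left\Vert\emph{C}_{\upsilon_{2}}\right\Vert\left\Vert\emph{C}_{\upsilon_{3}}\right\Vert=\rho^{2}(1-\rho^{\prime}{}^{2})\left\vert\cos\upsilon_{3}\right\vert$; only the component of $\emph{C}_{\upsilon_{1}}$ transverse to the plane $\mathrm{span}\{\emph{C}_{\upsilon_{2}},\emph{C}_{\upsilon_{3}}\}$ contributes to the triple product, and using the intermediate identity $\emph{C}_{\upsilon_{1}}^{1}=-Q$ one checks that the $k_{2}$- and $k_{3}$-parts of that transverse component cancel, leaving the $3$-volume $\left\vert Q\right\vert\rho^{2}\sqrt{1-\rho^{\prime}{}^{2}}\left\vert\cos\upsilon_{3}\right\vert$; squaring gives (\ref{14}). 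Normalizing the same triple product reproduces the unit normal (\ref{12}), and one also records the convenient relation $\emph{C}-\alpha=\rho N$.

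For $\det[h_{ij}]$, I would compute the second derivatives (\ref{15})--(\ref{20}) via (\ref{frenet}) and form $h_{ij}=\left\langle\emph{C}_{\upsilon_{i}\upsilon_{j}},N\right\rangle$ with $N$ from (\ref{12}); the vanishing $h_{23}=0$ mirrors $g_{23}=0$, so $[h_{ij}]$ is an arrowhead matrix and $\det[h_{ij}]=h_{11}h_{22}h_{33}-h_{12}^{2}h_{33}-h_{13}^{2}h_{22}$ is a three-term expansion. Inserting the entries from (\ref{21}) and collecting powers of $\cos\upsilon_{3}$ yields (\ref{22}). The main obstacle everywhere is bookkeeping: the $\upsilon_{1}$-derivatives drag in $\rho,\rho^{\prime},\rho^{\prime\prime}$ together with the three curvatures $k_{1},k_{2},k_{3}$, so $g_{11}$, $h_{11}$ and the second derivatives are long expressions. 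The structural point I would be most careful to verify — and the reason (\ref{gauss}) comes out so clean — is that although $k_{2}$ and $k_{3}$ enter $g_{11}$ and $h_{11}$ quadratically, they disappear entirely from $\det[h_{ij}]$ (as visible in (\ref{22})), so that the Gaussian curvature depends only on the first curvature $k_{1}$; checking this cancellation inside the three-term minor is the one genuinely error-prone step.
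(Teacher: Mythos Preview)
Your proposal is correct and matches the paper's approach exactly: the paper derives (\ref{gauss}) in one line from (\ref{yy4}), (\ref{14}) and (\ref{22}) by forming the quotient $\det[h_{ij}]/\det[g_{ij}]$ and cancelling the common factor $\rho^{2}(1-\rho^{\prime2})\cos^{2}\upsilon_{3}$, just as you describe. Your additional remarks on obtaining (\ref{14}) via $\left\Vert \emph{C}_{\upsilon_{1}}\times\emph{C}_{\upsilon_{2}}\times\emph{C}_{\upsilon_{3}}\right\Vert^{2}$ and on the arrowhead structure of $[h_{ij}]$ are sound elaborations of steps the paper simply records without commentary.
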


\begin{theorem}
The canal hypersurface (\ref{4-canal}) in Euclidean 4-space is flat if and
only if it is a circular hypercylinder or circular hypercone.
\end{theorem}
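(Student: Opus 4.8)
The plan is to read flatness directly off the Gauss--Kronecker curvature \eqref{gauss}. Put $P:=k_{1}\sqrt{1-\rho^{\prime}{}^{2}}\cos\upsilon_{2}\cos\upsilon_{3}+\rho^{\prime\prime}$. The starting observation is that the numerator of \eqref{gauss} factors: since $k_{1}^{2}(1-\rho^{\prime}{}^{2})\cos^{2}\upsilon_{2}\cos^{2}\upsilon_{3}+2k_{1}\rho^{\prime\prime}\sqrt{1-\rho^{\prime}{}^{2}}\cos\upsilon_{2}\cos\upsilon_{3}+(\rho^{\prime\prime})^{2}=P^{2}$, that numerator equals
\begin{equation}
(1-\rho^{\prime}{}^{2})P-\rho P^{2}=P\bigl((1-\rho^{\prime}{}^{2})-\rho P\bigr),\label{facnum}
\end{equation}
while the denominator of \eqref{gauss} is $\rho^{2}\bigl((1-\rho^{\prime}{}^{2})-\rho P\bigr)^{2}$, because $(1-\rho^{\prime}{}^{2})-\rho P=-Q$ with $Q$ as defined right after \eqref{13}. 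Hence, at every regular point of the hypersurface (where $Q\neq0$),
\begin{equation}
K=\frac{P}{\rho^{2}\bigl((1-\rho^{\prime}{}^{2})-\rho P\bigr)}=\frac{-P}{\rho^{2}Q},\label{Ksimp}
\end{equation}
so the canal hypersurface \eqref{4-canal} is flat if and only if $P\equiv0$, i.e.
\begin{equation}
k_{1}\sqrt{1-\rho^{\prime}{}^{2}}\cos\upsilon_{2}\cos\upsilon_{3}+\rho^{\prime\prime}\equiv0.\label{flat}
\end{equation}

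Next I would squeeze pointwise information out of \eqref{flat}. Its coefficients $k_{1}$, $\rho^{\prime}$, $\rho^{\prime\prime}$ depend on $\upsilon_{1}$ only, while $\cos\upsilon_{2}\cos\upsilon_{3}$ takes every value in $[-1,1]$ as $\upsilon_{2},\upsilon_{3}$ run over $[0,2\pi)$; thus \eqref{flat} is an affine function of $\cos\upsilon_{2}\cos\upsilon_{3}$ that vanishes identically, which forces $k_{1}\sqrt{1-\rho^{\prime}{}^{2}}\equiv0$ and $\rho^{\prime\prime}\equiv0$. Since $|\rho^{\prime}|<1$ for a canal hypersurface we have $1-\rho^{\prime}{}^{2}>0$, so $k_{1}\equiv0$, which means the directrix $\alpha$ is a straight line; and $\rho^{\prime\prime}\equiv0$ says $\rho(\upsilon_{1})=a\upsilon_{1}+b$ is affine (with $\rho>0$ and $|a|<1$ on the parameter interval).

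It remains to recognize these hypersurfaces and to verify the converse. Substituting $k_{1}=k_{2}=k_{3}=0$, $\alpha(\upsilon_{1})=\upsilon_{1}F_{1}+p_{0}$ for a fixed orthonormal frame $F_{1},F_{2},F_{3},F_{4}$ of $E^{4}$, and $\rho=a\upsilon_{1}+b$ into \eqref{4-canal}, the hypersurface becomes the envelope of hyperspheres of radius $\rho(\upsilon_{1})$ centred on a line: its $\upsilon_{1}=\text{const}$ slices are round $2$-spheres whose centres lie on that line and whose radii $\rho(\upsilon_{1})\sqrt{1-a^{2}}$ vary affinely with the position of the centre. When $a=0$ these radii are constant, so the hypersurface is a circular hypercylinder $S^{2}(b)\times\mathbb{R}$; when $a\neq0$ the $2$-spheres collapse to a single apex at the value where $\rho=0$, so it is a circular hypercone. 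Conversely, a circular hypercylinder (respectively, a circular hypercone) is exactly a canal hypersurface whose directrix is a line, so $k_{1}\equiv0$, and whose radius function is constant (respectively, affine and non-constant), so $\rho^{\prime\prime}\equiv0$; in either case \eqref{flat} holds, hence $K\equiv0$ by \eqref{Ksimp}. This is the hypersurface analogue of the classical fact that the only flat canal surfaces in $E^{3}$ are circular cylinders and cones.

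The one genuine difficulty is the reduction in the first paragraph: spotting the factorization \eqref{facnum}, which turns the condition $K\equiv0$ into the single linear constraint \eqref{flat}. A secondary point is that the flat solutions all have $k_{1}\equiv0$, where the Frenet frame underlying \eqref{4-canal} and \eqref{gauss} is no longer defined in the usual way; this is handled by noting that those formulas remain valid as the $k_{1}=k_{2}=k_{3}=0$ limit, or, equivalently, by recomputing $[g_{ij}]$ and $[h_{ij}]$ for the tube about a line directly. Everything else is elementary algebra.
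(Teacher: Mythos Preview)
Your proof is correct and follows the same overall strategy as the paper's: set the numerator of \eqref{gauss} to zero, use the functional independence of the $\upsilon_2,\upsilon_3$-factors to extract conditions on $k_1$ and $\rho$, deduce $k_1=0$ and $\rho''=0$, and interpret the result geometrically. The difference is a matter of algebra. The paper expands the numerator directly as a degree-two polynomial in $\cos\upsilon_2\cos\upsilon_3$ and invokes the linear independence of $\{1,\cos\upsilon_2\cos\upsilon_3,\cos^2\upsilon_2\cos^2\upsilon_3\}$, which yields three equations (one of them redundant). You instead notice the factorization of the numerator as $P\bigl((1-\rho'^{2})-\rho P\bigr)$, cancel the factor $(1-\rho'^{2})-\rho P=-Q$ common to numerator and denominator, and obtain the cleaner formula $K=-P/(\rho^2 Q)$; flatness then reduces at once to the affine equation $P\equiv0$, giving the two conditions $k_1=0$ and $\rho''=0$ in a single step. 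Your route is shorter and has the side benefit that the simplified expression $K=-P/(\rho^2 Q)$ makes later results (e.g., the principal curvature $\kappa_3=K\rho^2$ and the relation $3H\rho=K\rho^3-2$) almost immediate. Your closing remark about the Frenet frame being undefined when $k_1\equiv0$ is a fair caveat the paper glosses over; your suggested fix (direct recomputation for a linear directrix) is the right way to handle it.
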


\begin{proof}
If the canal hypersurface (\ref{4-canal}) in Euclidean 4-space is flat, then
from (\ref{gauss}) it must be%
\begin{equation}
k_{1}\sqrt{1-\rho^{\prime}{}^{2}}(1-\rho^{\prime}{}^{2}-2\rho\rho
^{\prime\prime})\cos\upsilon_{2}\cos\upsilon_{3}-\rho(k_{1})^{2}%
(1-\rho^{\prime2})\cos^{2}\upsilon_{2}\cos^{2}\upsilon_{3}+(1-\rho^{\prime}%
{}^{2}-\rho\rho^{\prime\prime})\rho^{\prime\prime}=0. \label{p1}%
\end{equation}
Since the set $\{1,\cos v_{2}\cos v_{3},\cos^{2}\upsilon_{2}\cos^{2}%
\upsilon_{3}\}$ is linearly independent, we have%
\begin{equation}
\left.
\begin{array}
[c]{l}%
k_{1}\sqrt{1-\rho^{\prime}{}^{2}}(1-\rho^{\prime}{}^{2}-2\rho\rho
^{\prime\prime})=0,\\
\rho(k_{1})^{2}(1-\rho^{\prime2})=0,\\
(1-\rho^{\prime}{}^{2}-\rho\rho^{\prime\prime})\rho^{\prime\prime}=0.
\end{array}
\right\}  \label{p2}%
\end{equation}
From the second equation of (\ref{p2}), since $\rho\neq0$ and $1-\rho
^{\prime2}\neq0,$ we have $k_{1}=0$ and so, (\ref{4-canal}) is a hypersurface
of revolution. Also, in this case, the first equation of (\ref{p2}) holds,
too. If we use $k_{1}=0$ in (\ref{gauss}), we have%
\begin{equation}
K=\frac{\rho^{\prime\prime}}{\rho^{2}(-1+\rho^{\prime}{}^{2}+\rho\rho
^{\prime\prime})}. \label{p3}%
\end{equation}
Since the hypersurface is flat, (\ref{p3}) implies $\rho^{\prime\prime}=0$,
that is, $\rho(v_{1})=av_{1}+b,$ $a,b\in%
%TCIMACRO{\U{211d} }%
%BeginExpansion
\mathbb{R}
%EndExpansion
,$ $a\neq\pm1$. From this, the third equation of (\ref{p2}) holds, too.
Therefore, (\ref{4-canal}) is a circular hypercylinder when $a=0$, or a
circular hypercone when $a\neq0,$ $a\neq\pm1$.

Conversely, if $k_{1}=0$ and $\rho(v_{1})=av_{1}+b$ (i.e., if (\ref{4-canal})
is a circular hypercylinder or a circular hypercone), then we have $K=0$ and
this completes the proof.
\end{proof}

Also, after finding the inverse of the matrix of the first fundamental form
and using this and (\ref{21}) in (\ref{7y}), the shape operator of the canal
hypersurface (\ref{4-canal}) is obtained by%
\begin{equation}
S=\left[
\begin{array}
[c]{ccc}%
S_{11} & S_{12} & S_{13}\\
S_{21} & S_{22} & S_{23}\\
S_{31} & S_{32} & S_{33}%
\end{array}
\right]  , \label{23}%
\end{equation}
where%
\[
\left.
\begin{array}
[c]{l}%
S_{11}=\frac{1}{Q^{2}}\left(
\begin{array}
[c]{l}%
(1-\rho^{\prime}{}^{2})\left(  k_{1}\sqrt{1-\rho^{\prime}{}^{2}}\cos
\upsilon_{2}\cos\upsilon_{3}+\rho^{\prime\prime}\right) \\
-\rho\left(  (k_{1})^{2}(1-\rho^{\prime}{}^{2})\cos^{2}\upsilon_{2}\cos
^{2}\upsilon_{3}+2k_{1}\rho^{\prime\prime}\sqrt{1-\rho^{\prime}{}^{2}}\cos
v_{2}\cos v_{3}+\rho^{\prime\prime2}\right)
\end{array}
\right)  ,\\
\\
S_{21}=\frac{1}{\rho Q}\left(  k_{1}\rho^{\prime}\sqrt{1-\rho^{\prime}{}^{2}%
}\sin v_{2}\sec v_{3}+k_{2}(1-\rho^{\prime}{}^{2})-k_{3}(1-\rho^{\prime}{}%
^{2})\cos v_{2}\tan v_{3}\right)  ,\\
\\
S_{31}=\frac{1}{\rho Q^{2}}\left(
\begin{array}
[c]{l}%
Qk_{3}(1-\rho^{\prime}{}^{2})\sin v_{2}\\
+k_{1}\rho^{\prime}\cos v_{2}\sin v_{3}\left(  -(1-\rho^{\prime}{}^{2}%
)^{\frac{3}{2}}+\rho\left(
\begin{array}
[c]{l}%
k_{1}(1-\rho^{\prime}{}^{2})\cos v_{2}\cos v_{3}\\
+\sqrt{1-\rho^{\prime}{}^{2}}\rho^{\prime\prime}%
\end{array}
\right)  \right)
\end{array}
\right)  ,\\
\\
S_{22}=S_{33}=-\frac{1}{\rho},\\
\\
S_{12}=S_{13}=S_{23}=S_{32}=0.
\end{array}
\right\}
\]

Hence from (\ref{yy5}) and (\ref{23}), we get

\begin{theorem}
The mean curvature of the canal hypersurface (\ref{4-canal}) in Euclidean
4-space is
\begin{equation}
H=\frac{\left(
\begin{array}
[c]{l}%
-3\rho^{2}\left(  (k_{1})^{2}(1-\rho^{\prime}{}^{2})\cos^{2}\upsilon_{2}%
\cos^{2}\upsilon_{3}+2k_{1}\rho^{\prime\prime}\sqrt{1-\rho^{\prime}{}^{2}}%
\cos\upsilon_{2}\cos\upsilon_{3}+\rho^{\prime\prime2}\right) \\
-2(1-\rho^{\prime}{}^{2})^{2}+5\rho(1-\rho^{\prime}{}^{2})(k_{1}\sqrt
{1-\rho^{\prime}{}^{2}}\cos\upsilon_{2}\cos\upsilon_{3}+\rho^{\prime\prime})
\end{array}
\right)  }{3\rho\left(  \rho\left(  k_{1}\sqrt{1-\rho^{\prime}{}^{2}}%
\cos\upsilon_{2}\cos\upsilon_{3}+\rho^{\prime\prime}\right)  -1+\rho^{\prime
}{}^{2}\right)  ^{2}}. \label{mean}%
\end{equation}

\end{theorem}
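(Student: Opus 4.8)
The plan is to read the mean curvature straight off the shape operator $S$ already displayed in (\ref{23}), using the definition $3H=tr(S)$ recorded in (\ref{yy5}). Since $S$ has been computed (by inverting $[g_{ij}]$ and multiplying by $[h_{ij}]$, as indicated before (\ref{23})), the remaining work is purely the trace of a $3\times3$ matrix followed by algebraic simplification. Concretely, I would first note that $tr(S)=S_{11}+S_{22}+S_{33}$, that the off-diagonal entries $S_{12},S_{13},S_{23},S_{21},S_{31},S_{32}$ play no role, and that (\ref{23}) gives $S_{22}=S_{33}=-\tfrac1\rho$, so that $3H=S_{11}-\tfrac2\rho$ and everything reduces to putting $S_{11}$ in a usable form.

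The key algebraic observation is a perfect-square collapse of $S_{11}$. Writing $A:=k_{1}\sqrt{1-(\rho')^{2}}\cos\upsilon_{2}\cos\upsilon_{3}+\rho''$ and $Q:=\rho A-\bigl(1-(\rho')^{2}\bigr)$ (this $Q$ is exactly the quantity appearing in (\ref{13})--(\ref{14})), the bracket $k_{1}^{2}(1-(\rho')^{2})\cos^{2}\upsilon_{2}\cos^{2}\upsilon_{3}+2k_{1}\rho''\sqrt{1-(\rho')^{2}}\cos\upsilon_{2}\cos\upsilon_{3}+(\rho'')^{2}$ occurring in the numerator of $S_{11}$ equals $A^{2}$. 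Hence
\[
S_{11}=\frac{(1-(\rho')^{2})A-\rho A^{2}}{Q^{2}}=\frac{A\bigl((1-(\rho')^{2})-\rho A\bigr)}{Q^{2}}=-\frac{A}{Q},
\]
since $(1-(\rho')^{2})-\rho A=-Q$. (As a sanity check, the same collapse turns (\ref{gauss}) into $K=-A/(\rho^{2}Q)$, which matches $K=\det S=S_{11}S_{22}S_{33}$.)

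The final step is to combine and expand: $3H=-\tfrac{A}{Q}-\tfrac2\rho=\dfrac{-\rho A-2Q}{\rho Q}$; multiplying numerator and denominator by $Q$ and substituting $Q=\rho A-(1-(\rho')^{2})$ gives
\[
-\rho A Q-2Q^{2}=-3\rho^{2}A^{2}+5\rho\,(1-(\rho')^{2})\,A-2\,(1-(\rho')^{2})^{2},
\]
and reinserting the expressions for $A$ and $A^{2}$ yields exactly the numerator of (\ref{mean}) over the denominator $3\rho Q^{2}=3\rho\bigl(\rho(k_{1}\sqrt{1-(\rho')^{2}}\cos\upsilon_{2}\cos\upsilon_{3}+\rho'')-1+(\rho')^{2}\bigr)^{2}$. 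The main obstacle is not conceptual: the genuinely laborious part (the inversion of $[g_{ij}]$ producing (\ref{23})) is already done and may be assumed. What remains to be careful about is spotting the perfect-square structure that reduces $S_{11}$ to $-A/Q$ and then performing the expansion of $Q^{2}$ cleanly enough to land on the stated form; no case distinction, limiting argument, or estimate is required.
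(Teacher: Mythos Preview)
Your proposal is correct and follows exactly the paper's approach: the paper simply writes ``from (\ref{yy5}) and (\ref{23}), we get'' the formula (\ref{mean}), i.e., it reads $3H=tr(S)=S_{11}+S_{22}+S_{33}$ off the already-computed shape operator. Your write-up is in fact more explicit than the paper's, since you spell out the perfect-square identity $A^{2}$ and the collapse $S_{11}=-A/Q$ that the paper leaves implicit.
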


\begin{theorem}
The canal hypersurface (\ref{4-canal}) in Euclidean 4-space is minimal if and
only if it is a hypersurface of revolution parametrized by%
\begin{equation}
\emph{R}(\upsilon_{1},\upsilon_{2},\upsilon_{3})=(\upsilon_{1}-\rho
\rho^{\prime},\pm\rho\sqrt{1-\rho^{\prime}{}^{2}}\cos\upsilon_{2}\cos
\upsilon_{3},\pm\rho\sqrt{1-\rho^{\prime}{}^{2}}\sin\upsilon_{2}\cos
\upsilon_{3},\pm\rho\sqrt{1-\rho^{\prime}{}^{2}}\sin\upsilon_{3}), \label{sr}%
\end{equation}
where $\rho(\upsilon_{1})$ is given by $%
%TCIMACRO{\dint }%
%BeginExpansion
{\displaystyle\int}
%EndExpansion
\frac{d\rho}{\sqrt{1-\left(  \frac{a}{\rho}\right)  ^{\frac{4}{3}}}}=\pm
v_{1}+b,$ $a,b\in%
%TCIMACRO{\U{211d} }%
%BeginExpansion
\mathbb{R}
%EndExpansion
$.
\end{theorem}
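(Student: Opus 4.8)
The starting point is the mean–curvature formula \eqref{mean}, so the first move is to set $H=0$ and clear the (nonvanishing) denominator, obtaining a polynomial identity in the numerator that must hold for all $\upsilon_2,\upsilon_3$. Writing $c:=\cos\upsilon_2\cos\upsilon_3$ and $w:=\sqrt{1-\rho'^2}$ for brevity, the numerator of \eqref{mean} becomes a quadratic in $c$:
\[
-3\rho^{2}\bigl(k_1^{2}w^{2}c^{2}+2k_1\rho''w\,c+\rho''^{2}\bigr)
-2w^{4}+5\rho w^{2}\bigl(k_1 w\,c+\rho''\bigr)=0 .
\]
Since $\{1,c,c^{2}\}$ is linearly independent as functions of $(\upsilon_2,\upsilon_3)$ (exactly as used in the proof of the flatness theorem), each coefficient vanishes separately. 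The $c^{2}$–coefficient gives $\rho^{2}k_1^{2}w^{2}=0$, hence $k_1\equiv 0$ (using $\rho\neq 0$, $w\neq 0$); this already forces the canal hypersurface to be a hypersurface of revolution, and plugging $k_1=0$ back into \eqref{4-canal} yields precisely the parametrization \eqref{sr} (the directrix may be taken as the $\upsilon_1$–axis, so $\alpha(\upsilon_1)=(\upsilon_1,0,0,0)$ and $F_1=(1,0,0,0)$, etc.).

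**Reducing to an ODE for $\rho$.** With $k_1=0$ the remaining two coefficient equations collapse to a single condition. The $c$–coefficient is $-6\rho^{2}k_1\rho''w+5\rho w^{3}k_1=0$, which is automatic once $k_1=0$; the constant term gives
\[
-3\rho^{2}\rho''^{2}-2w^{4}+5\rho w^{2}\rho''=0,
\qquad w^{2}=1-\rho'^{2}.
\]
So minimality is equivalent to the ODE $3\rho^{2}\rho''^{2}-5\rho(1-\rho'^2)\rho''+2(1-\rho'^2)^{2}=0$. This is a quadratic in $\rho''$; solving it factors as
\[
\bigl(\rho\rho''-(1-\rho'^2)\bigr)\bigl(3\rho\rho''-2(1-\rho'^2)\bigr)=0 .
\]
The first factor $\rho\rho''=1-\rho'^2$ would make the denominator $Q=\rho(k_1wc+\rho'')-1+\rho'^2=\rho\rho''-(1-\rho'^2)$ vanish, which is excluded (the hypersurface degenerates), so the governing equation is $3\rho\rho''=2(1-\rho'^2)$.

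**Integrating.** The last step is to solve $3\rho\rho''=2(1-\rho'^2)$. Using the standard substitution $p=\rho'$, $\rho''=p\,\frac{dp}{d\rho}$, the equation becomes $3\rho p\,\frac{dp}{d\rho}=2(1-p^{2})$, a separable first–order ODE in $p(\rho)$:
\[
\frac{p\,dp}{1-p^{2}}=\frac{2}{3}\,\frac{d\rho}{\rho}
\ \Longrightarrow\ -\tfrac12\ln(1-p^{2})=\tfrac23\ln\rho+\text{const},
\]
hence $1-p^{2}=\bigl(\tfrac{a}{\rho}\bigr)^{4/3}$ for a constant $a$, i.e. $\rho'=\pm\sqrt{1-(a/\rho)^{4/3}}$. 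Separating variables once more gives
\[
\int\frac{d\rho}{\sqrt{1-\left(\frac{a}{\rho}\right)^{4/3}}}=\pm\upsilon_1+b,
\qquad a,b\in\mathbb{R},
\]
which is exactly the stated condition. The converse is immediate: if $k_1=0$ then \eqref{mean} reduces to $H=\bigl(-3\rho^2\rho''^2-2(1-\rho'^2)^2+5\rho(1-\rho'^2)\rho''\bigr)/\bigl(3\rho(\rho\rho''-1+\rho'^2)^2\bigr)$, and substituting $3\rho\rho''=2(1-\rho'^2)$ into the numerator gives $0$. The only real obstacle is bookkeeping: verifying that the $c$– and $c^2$–coefficients really do force $k_1=0$ and that the constant–term ODE factors as claimed — everything after that is routine separation of variables, and one must only remember to discard the spurious factor $\rho\rho''=1-\rho'^2$ because it coincides with the vanishing of $Q$.
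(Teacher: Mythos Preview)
Your proof is correct and follows essentially the same route as the paper: use linear independence of $\{1,\cos\upsilon_2\cos\upsilon_3,\cos^2\upsilon_2\cos^2\upsilon_3\}$ to force $k_1=0$, reduce to the ODE $3\rho\rho''=2(1-\rho'^2)$, and integrate via the substitution $p=\rho'$, $\rho''=p\,dp/d\rho$. The only cosmetic difference is that the paper first simplifies the mean curvature with $k_1=0$ to $H=\dfrac{2-2\rho'^2-3\rho\rho''}{3\rho(-1+\rho'^2+\rho\rho'')}$ by cancelling the common factor $\rho\rho''-(1-\rho'^2)$ between numerator and denominator, so the ODE $2-2\rho'^2-3\rho\rho''=0$ appears directly; you instead keep the full numerator, factor it as $(\rho\rho''-(1-\rho'^2))(3\rho\rho''-2(1-\rho'^2))=0$, and then discard the first factor because it coincides with $Q=0$. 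Both handlings are valid and lead to the same governing equation.
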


\begin{proof}
If the canal hypersurface (\ref{4-canal}) in Euclidean 4-space is minimal,
then from (\ref{mean}) it must be%
\begin{align}
&  k_{1}\sqrt{1-\rho^{\prime}{}^{2}}(5\rho(1-\rho^{\prime}{}^{2})-6\rho
^{2}\rho^{\prime\prime})\cos\upsilon_{2}\cos\upsilon_{3}-3\rho^{2}(k_{1}%
)^{2}(1-\rho^{\prime}{}^{2})\cos^{2}\upsilon_{2}\cos^{2}\upsilon
_{3}\nonumber\\
&  \text{ }+(2-2\rho^{\prime}{}^{2}-3\rho\rho^{\prime\prime})(-1+\rho^{\prime
}{}^{2}+\rho\rho^{\prime\prime})=0. \label{p4}%
\end{align}
Since the set $\{1,\cos v_{2}\cos v_{3},\cos^{2}\upsilon_{2}\cos^{2}%
\upsilon_{3}\}$ is linearly independent, we have%
\begin{equation}
\left.
\begin{array}
[c]{l}%
k_{1}\sqrt{1-\rho^{\prime}{}^{2}}(5\rho(1-\rho^{\prime}{}^{2})-6\rho^{2}%
\rho^{\prime\prime})=0,\\
3\rho^{2}(k_{1})^{2}(1-\rho^{\prime}{}^{2})=0,\\
(2-2\rho^{\prime}{}^{2}-3\rho\rho^{\prime\prime})(-1+\rho^{\prime}{}^{2}%
+\rho\rho^{\prime\prime})=0.
\end{array}
\right\}  \label{p5}%
\end{equation}
From the second equation of (\ref{p5}), we have $k_{1}=0$ and then the first
equation of (\ref{p5}) holds, too. If we use $k_{1}=0$ in (\ref{mean}), we
have%
\begin{equation}
H=\frac{2-2\rho^{\prime}{}^{2}-3\rho\rho^{\prime\prime}}{3\rho(-1+\rho
^{\prime}{}^{2}+\rho\rho^{\prime\prime})}. \label{p6}%
\end{equation}
So, if the canal hypersurface (\ref{4-canal}) in Euclidean 4-space is minimal,
then from (\ref{p6}) $\rho(v_{1})$ must satisfy the differential equation
\begin{equation}
2-2\rho^{\prime}{}(v_{1})^{2}-3\rho(v_{1})\rho^{\prime\prime}(v_{1})=0.
\label{p7}%
\end{equation}
Now, let us solve (\ref{p7}).

If we take $\rho^{\prime}{}(v_{1})=f(v_{1}),$ we get
\begin{equation}
\rho^{\prime}{}^{\prime}=f^{\prime}=\frac{df}{d\rho}\frac{d\rho}{dv_{1}}%
=\frac{df}{d\rho}f. \label{p8}%
\end{equation}
Using (\ref{p8}) in (\ref{p7}), we have%
\begin{equation}
3\rho\frac{df}{d\rho}f+2f^{2}-2=0. \label{p9}%
\end{equation}
From (\ref{p7}), $\rho^{\prime}{}(v_{1})=f(v_{1})\neq0$ and so we reach that%
\begin{equation}
\frac{3f}{2(1-f^{2})}df=\frac{d\rho}{\rho}. \label{p10}%
\end{equation}
By integrating (\ref{p10}), we have%
\begin{equation}
\text{ }\text{ }f=\pm\sqrt{1-\left(  \frac{a}{\rho}\right)  ^{\frac{4}{3}}},
\label{p11}%
\end{equation}
where $a$ is constant. Since $\rho^{\prime}{}=\frac{d\rho}{dv_{1}}=f,$ from
(\ref{p11}) we get%
\begin{equation}%
%TCIMACRO{\dint }%
%BeginExpansion
{\displaystyle\int}
%EndExpansion
\frac{d\rho}{\sqrt{1-\left(  \frac{a}{\rho}\right)  ^{\frac{4}{3}}}}=\pm%
%TCIMACRO{\dint }%
%BeginExpansion
{\displaystyle\int}
%EndExpansion
dv_{1}. \label{p12y}%
\end{equation}
Since $k_{1}=0$, without loss of generality, we can suppose the curve
$\alpha(\upsilon_{1})$ as $\alpha(\upsilon_{1})=(\upsilon_{1},0,0,0)$ and
$F_{1}=(1,0,0,0),$ $F_{2}=(0,1,0,0),$ $F_{3}=(0,0,1,0),$ $F_{4}=(0,0,0,1)$.
Then, (\ref{4-canal}) can be parametrized by (\ref{sr}) and from (\ref{p12y}),
$\rho(v_{1})$ satisfies $%
%TCIMACRO{\dint }%
%BeginExpansion
{\displaystyle\int}
%EndExpansion
\frac{d\rho}{\sqrt{1-\left(  \frac{a}{\rho}\right)  ^{\frac{4}{3}}}}=\pm
v_{1}+b,$ $a,b\in%
%TCIMACRO{\U{211d} }%
%BeginExpansion
\mathbb{R}
%EndExpansion
.$

Conversely, if (\ref{4-canal}) is parametrized by (\ref{sr}), where
$\rho(v_{1})$ satisfies $%
%TCIMACRO{\dint }%
%BeginExpansion
{\displaystyle\int}
%EndExpansion
\frac{d\rho}{\sqrt{1-\left(  \frac{a}{\rho}\right)  ^{\frac{4}{3}}}}=\pm
v_{1}+b,$ $a,b\in%
%TCIMACRO{\U{211d} }%
%BeginExpansion
\mathbb{R}
%EndExpansion
$, then we have $H=0$ and this completes the proof.
\end{proof}

Also, we know that \cite{Pinl}, the only minimal hypersurface of revolution
(except the hyperplane) in Euclidean space is the generalized catenoid. Thus,
from the last Theorem, we have

\begin{corollary}
The canal hypersurface (\ref{4-canal}) is minimal if and only if it is a
generalized catenoid.
\end{corollary}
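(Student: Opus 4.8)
The plan is to combine the previous theorem with the cited classification of minimal hypersurfaces of revolution. From the last theorem we already know that the canal hypersurface \eqref{4-canal} is minimal if and only if it is the hypersurface of revolution $\emph{R}$ parametrized by \eqref{sr}, with the radius profile $\rho(\upsilon_1)$ satisfying $\int d\rho/\sqrt{1-(a/\rho)^{4/3}}=\pm\upsilon_1+b$. So the corollary follows once we identify this particular hypersurface of revolution with the generalized catenoid.

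First I would invoke the result of \cite{Pinl}: in Euclidean space the only minimal hypersurface of revolution, apart from the hyperplane, is the generalized catenoid. Since \eqref{4-canal} has $\rho>0$ and $\rho'^2\neq 1$, the radius function is non-constant and bounded away from the degenerate (hyperplane) case, so the minimal hypersurface of revolution obtained in the previous theorem is genuinely a curved one, hence must be the generalized catenoid. Conversely, the generalized catenoid is a minimal hypersurface of revolution, and every hypersurface of revolution with rotation axis a line and meridian a circle of varying radius is a canal hypersurface in the sense of \eqref{4-canal} (take $k_1=0$); therefore it falls under the scope of our family and is minimal by the last theorem.

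A slightly more self-contained route, which I would include as a remark, is to check directly that the ODE characterizing $\rho$ coincides with the meridian equation of the generalized catenoid. The generalized catenoid in $E^4$ is the hypersurface of revolution whose profile curve solves the classical variational equation $\rho\rho''=(n-2)(1+\rho'^2)$ type relation appropriate to dimension $4$; one verifies that \eqref{p7}, namely $2-2\rho'^2-3\rho\rho''=0$, is exactly this equation (with $n-2$ replaced by the correct constant $2/3$ coming from $3H=\mathrm{tr}(S)$), and hence its solutions \eqref{p11}--\eqref{p12y} are precisely the meridians of the generalized catenoid. This makes the identification explicit rather than merely citing \cite{Pinl}.

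The only real obstacle is a bookkeeping one: making sure the normalization conventions match, i.e. that "generalized catenoid" as used in \cite{Pinl} refers to the same object as the hypersurface of revolution \eqref{sr} with profile \eqref{p12y}, and that no spurious hyperplane solution sneaks in. Since our standing assumptions $\rho>0$, $|\rho'|<1$ (equivalently $\rho'^2\neq1$) and $\rho''$ not identically zero rule out the hyperplane, the correspondence is exact, and the corollary is immediate.
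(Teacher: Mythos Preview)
Your proposal is correct and follows essentially the same approach as the paper: the paper simply states the corollary as an immediate consequence of combining the preceding theorem with the classification in \cite{Pinl}, without a separate proof. Your write-up is actually more careful than the paper's one-line justification, since you explicitly address the converse direction and the exclusion of the hyperplane case.
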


Here, from (\ref{gauss}) and (\ref{mean}), we can state the following theorem
which gives an important relation between Gaussian and mean curvatures:

\begin{theorem}
The Gaussian curvature $K$ and the mean curvature $H$ of canal hypersurfaces
(\ref{4-canal})\ in Euclidean 4-space satisfy%
\begin{equation}
H=\frac{1}{3}(K\rho^{2}-\frac{2}{\rho}).\label{24}%
\end{equation}

\end{theorem}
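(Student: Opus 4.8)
The plan is to verify the identity \eqref{24} by direct substitution of the explicit formulas \eqref{gauss} for $K$ and \eqref{mean} for $H$, exploiting the fact that both expressions share the same denominator structure. Observe that the denominator of $K$ in \eqref{gauss} is $\rho^{2}Q^{2}$, where $Q=\rho\left(k_{1}\sqrt{1-\rho^{\prime}{}^{2}}\cos\upsilon_{2}\cos\upsilon_{3}+\rho^{\prime\prime}\right)-1+\rho^{\prime}{}^{2}$ is precisely the quantity introduced after \eqref{13}, while the denominator of $H$ in \eqref{mean} is $3\rho Q^{2}$. So the first step is to introduce the abbreviations $P:=k_{1}\sqrt{1-\rho^{\prime}{}^{2}}\cos\upsilon_{2}\cos\upsilon_{3}+\rho^{\prime\prime}$ and $T:=(k_{1})^{2}(1-\rho^{\prime2})\cos^{2}\upsilon_{2}\cos^{2}\upsilon_{3}+2k_{1}\rho^{\prime\prime}\sqrt{1-\rho^{\prime}{}^{2}}\cos\upsilon_{2}\cos\upsilon_{3}+\rho^{\prime\prime2}$, so that $Q=\rho P-(1-\rho^{\prime2})$, the numerator of $K$ is $(1-\rho^{\prime2})P-\rho T$, and the numerator of $H$ is $-3\rho^{2}T-2(1-\rho^{\prime2})^{2}+5\rho(1-\rho^{\prime2})P$.

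Next I would compute the right-hand side of \eqref{24} over the common denominator $3\rho Q^{2}$:
\begin{equation}
\frac{1}{3}\left(K\rho^{2}-\frac{2}{\rho}\right)=\frac{1}{3}\cdot\frac{\rho^{2}\bigl((1-\rho^{\prime2})P-\rho T\bigr)}{\rho^{2}Q^{2}}-\frac{2}{3\rho}=\frac{\rho\bigl((1-\rho^{\prime2})P-\rho T\bigr)-2Q^{2}}{3\rho Q^{2}}.
\end{equation}
It then remains to check that the numerator $\rho(1-\rho^{\prime2})P-\rho^{2}T-2Q^{2}$ equals the numerator of $H$, namely $-3\rho^{2}T-2(1-\rho^{\prime2})^{2}+5\rho(1-\rho^{\prime2})P$. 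Substituting $Q=\rho P-(1-\rho^{\prime2})$ gives $Q^{2}=\rho^{2}P^{2}-2\rho(1-\rho^{\prime2})P+(1-\rho^{\prime2})^{2}$. Here the only genuinely non-obvious point is that $P^{2}=T$: expanding $\bigl(k_{1}\sqrt{1-\rho^{\prime}{}^{2}}\cos\upsilon_{2}\cos\upsilon_{3}+\rho^{\prime\prime}\bigr)^{2}$ reproduces exactly the three terms defining $T$. Using $P^{2}=T$, we get $2Q^{2}=2\rho^{2}T-4\rho(1-\rho^{\prime2})P+2(1-\rho^{\prime2})^{2}$, and hence the numerator of the right-hand side becomes
\begin{equation}
\rho(1-\rho^{\prime2})P-\rho^{2}T-2\rho^{2}T+4\rho(1-\rho^{\prime2})P-2(1-\rho^{\prime2})^{2}=-3\rho^{2}T+5\rho(1-\rho^{\prime2})P-2(1-\rho^{\prime2})^{2},
\end{equation}
which is precisely the numerator of $H$ in \eqref{mean}. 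Dividing by the common denominator $3\rho Q^{2}$ yields \eqref{24}.

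The main obstacle is essentially bookkeeping rather than conceptual: one must correctly read off the numerator of $K$ from the displayed fraction in \eqref{gauss} (whose numerator is split across a large bracketed array) and recognize the relation $P^{2}=T$ together with $Q=\rho P-(1-\rho^{\prime2})$; once these two algebraic observations are in place, the identity collapses in a couple of lines. An alternative, even shorter route is to avoid \eqref{mean} entirely: from the shape operator \eqref{23} one has $S_{22}=S_{33}=-1/\rho$ and $S_{11}=\bigl((1-\rho^{\prime2})P-\rho T\bigr)/Q^{2}$, so $3H=\operatorname{tr}(S)=S_{11}-2/\rho$, while $K=\det(S)=S_{11}\cdot(1/\rho^{2})$ since $S$ is block-lower-triangular with diagonal $(S_{11},-1/\rho,-1/\rho)$; therefore $3H=S_{11}-2/\rho=\rho^{2}K-2/\rho$, which is \eqref{24} immediately. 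I would present this second argument as the main proof, as it bypasses all of the heavy curvature formulas.
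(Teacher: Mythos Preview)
Your proposal is correct. The paper itself gives no detailed proof of this identity: it simply asserts that the relation follows ``from (\ref{gauss}) and (\ref{mean})'', which is exactly your first approach of direct algebraic substitution (and your abbreviations $P$, $T$, $Q$ with the observations $Q=\rho P-(1-\rho'^{2})$ and $P^{2}=T$ make that verification transparent).

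Your second argument, via the lower-triangular shape operator (\ref{23}) with $S_{12}=S_{13}=S_{23}=S_{32}=0$ and $S_{22}=S_{33}=-1/\rho$, is a genuinely cleaner route that the paper does not take at this point. It is essentially the statement that $\kappa_{1}=\kappa_{2}=-1/\rho$ and $\kappa_{3}=S_{11}=K\rho^{2}$, from which $3H=\kappa_{1}+\kappa_{2}+\kappa_{3}=K\rho^{2}-2/\rho$ is immediate; the paper records these principal curvatures only \emph{after} the theorem, in (\ref{29'}), and never uses them to derive (\ref{24}). What your approach buys is that the relation is seen to be a structural consequence of the double principal curvature $-1/\rho$, rather than a miraculous cancellation between two complicated formulas; the paper's route, on the other hand, requires no discussion of the shape operator's triangular form.
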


Now, if%
\begin{equation}
H_{\upsilon_{i}}K_{\upsilon_{j}}-H_{\upsilon_{j}}K_{\upsilon_{i}}=0,\text{
}i\neq j,\text{ }i,j=1,2,3, \label{w1}%
\end{equation}
holds on a hypersurface, then we call the hypersurface as $(H,K)_{\{ij\}}%
$-Weingarten hypersurface, where $X_{\upsilon_{i}}=\frac{\partial X}{\partial
v_{i}}$. So, from (\ref{gauss}) and (\ref{mean}) we have

\begin{theorem}
The canal hypersurface (\ref{4-canal}) in Euclidean 4-space is $(H,K)_{\{23\}}%
$-Weingarten hypersurface.
\end{theorem}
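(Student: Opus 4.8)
The plan is to exploit the curvature relation (\ref{24}) established in the previous theorem, namely $H=\frac{1}{3}\left(K\rho^{2}-\frac{2}{\rho}\right)$, together with the crucial observation that the radius function $\rho=\rho(\upsilon_{1})$ depends only on the parameter $\upsilon_{1}$ and is therefore constant with respect to both $\upsilon_{2}$ and $\upsilon_{3}$. This separation of variables is what makes the choice of the index pair $\{23\}$ natural.

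First I would differentiate (\ref{24}) with respect to $\upsilon_{2}$. Since $\rho$, and hence $\rho^{2}$ and $1/\rho$, are independent of $\upsilon_{2}$, this gives $H_{\upsilon_{2}}=\frac{1}{3}\rho^{2}K_{\upsilon_{2}}$. In exactly the same way, differentiating (\ref{24}) with respect to $\upsilon_{3}$ yields $H_{\upsilon_{3}}=\frac{1}{3}\rho^{2}K_{\upsilon_{3}}$.

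Then I would substitute these two identities into the Weingarten expression from (\ref{w1}): $H_{\upsilon_{2}}K_{\upsilon_{3}}-H_{\upsilon_{3}}K_{\upsilon_{2}}=\frac{1}{3}\rho^{2}K_{\upsilon_{2}}K_{\upsilon_{3}}-\frac{1}{3}\rho^{2}K_{\upsilon_{3}}K_{\upsilon_{2}}=0$, which is precisely condition (\ref{w1}) for $i=2$, $j=3$. Hence the canal hypersurface (\ref{4-canal}) is $(H,K)_{\{23\}}$-Weingarten, as claimed.

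There is essentially no analytic obstacle here: the whole argument rests on the fact that the only $\upsilon_{1}$-dependent quantity linking $H$ and $K$ in (\ref{24}) is $\rho$, so that $\partial_{\upsilon_{2}}$ and $\partial_{\upsilon_{3}}$ act on $H$ and on $\rho^{2}K$ in the same way up to the common scalar $\rho^{2}$. One could, if desired, note that if $K_{\upsilon_{2}}$ or $K_{\upsilon_{3}}$ vanishes identically the conclusion is immediate anyway, but no explicit computation of these partial derivatives from (\ref{gauss}) is required. It is also worth remarking that the analogous identity fails for the pairs $\{12\}$ and $\{13\}$, since $\partial_{\upsilon_{1}}$ additionally differentiates $\rho$; thus the index set $\{23\}$ appearing in the statement is exactly the one dictated by the structure of (\ref{24}).
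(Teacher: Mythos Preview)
Your argument is correct. Differentiating the identity (\ref{24}) in the directions $\upsilon_{2}$ and $\upsilon_{3}$, where $\rho$ is inert, immediately gives $H_{\upsilon_{2}}=\tfrac{1}{3}\rho^{2}K_{\upsilon_{2}}$ and $H_{\upsilon_{3}}=\tfrac{1}{3}\rho^{2}K_{\upsilon_{3}}$, and the Weingarten relation (\ref{w1}) for $\{i,j\}=\{2,3\}$ follows by cancellation exactly as you wrote.

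The paper itself does not spell out a proof; it simply points to the explicit formulas (\ref{gauss}) and (\ref{mean}). The natural reading of that citation is a direct verification: in both (\ref{gauss}) and (\ref{mean}) the variables $\upsilon_{2},\upsilon_{3}$ enter only through the single combination $w=\cos\upsilon_{2}\cos\upsilon_{3}$, so $H_{\upsilon_{j}}=H'(w)\,w_{\upsilon_{j}}$ and $K_{\upsilon_{j}}=K'(w)\,w_{\upsilon_{j}}$ for $j=2,3$, whence the Jacobian $H_{\upsilon_{2}}K_{\upsilon_{3}}-H_{\upsilon_{3}}K_{\upsilon_{2}}$ vanishes. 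Your route via (\ref{24}) is a slight repackaging of the same fact---(\ref{24}) is itself derived from (\ref{gauss}) and (\ref{mean})---but it is cleaner, since it replaces any inspection of the explicit curvature expressions by a one-line consequence of the already established functional relation between $H$ and $K$. Your closing remark that the argument breaks down for the pairs $\{12\}$ and $\{13\}$ because $\partial_{\upsilon_{1}}$ hits $\rho$ is also a nice clarification that the paper leaves implicit.
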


Also, from (\ref{23}) we have%

\begin{equation}
\det(S-\kappa I_{3})=-\frac{(\kappa\rho+1)^{2}}{\rho^{2}Q{}^{2}}\left(
\begin{array}
[c]{l}%
\kappa\rho^{2}\left(
\begin{array}
[c]{l}%
2k_{1}\rho^{\prime\prime}\sqrt{1-\rho^{\prime2}}\cos\upsilon_{2}\cos
\upsilon_{3}\\
+(k_{1})^{2}\left(  1-\rho^{\prime}{}^{2}\right)  \cos^{2}\upsilon_{2}\cos
^{2}\upsilon_{3}+\rho^{\prime\prime2}%
\end{array}
\right) \\
+\rho\left(
\begin{array}
[c]{l}%
2k_{1}\sqrt{1-\rho^{\prime2}}\left(  \kappa\rho^{\prime2}-\kappa+\rho
^{\prime\prime}\right)  \cos\upsilon_{2}\cos\upsilon_{3}\\
+\rho^{\prime\prime}\left(  2\kappa\rho^{\prime2}-2\kappa+\rho^{\prime\prime
}\right) \\
+(k_{1})^{2}\left(  1-\rho^{\prime}{}^{2}\right)  \cos^{2}\upsilon_{2}\cos
^{2}\upsilon_{3}%
\end{array}
\right) \\
-\left(  1-\rho^{\prime}{}^{2}\right)  \left(  \kappa\rho^{\prime2}%
-\kappa+k_{1}\sqrt{1-\rho^{\prime2}}\cos\upsilon_{2}\cos\upsilon_{3}%
+\rho^{\prime\prime}\right)
\end{array}
\right)  . \label{29''}%
\end{equation}
By solving the equation $\det(S-\kappa I_{3})=0$ from (\ref{29''}), we obtain
the principal curvatures of the canal hypersurfaces (\ref{4-canal}) in $E^{4}$
as follows:

\begin{theorem}
The principal curvatures of the canal hypersurfaces (\ref{4-canal}) in $E^{4}$
are%
\begin{equation}
\kappa_{1}=\kappa_{2}=-\frac{1}{\rho}\text{ \ \ and \ \ }\kappa_{3}=K\rho^{2}.
\label{29'}%
\end{equation}

\end{theorem}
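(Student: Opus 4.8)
The plan is to read the three principal curvatures directly off the shape operator $S$ already computed in (\ref{23}). The decisive structural observation is that in (\ref{23}) every entry strictly above the main diagonal vanishes, namely $S_{12}=S_{13}=S_{23}=S_{32}=0$, so $S$ is lower triangular with diagonal entries $S_{11}$, $S_{22}=-\frac{1}{\rho}$ and $S_{33}=-\frac{1}{\rho}$. Hence its characteristic polynomial is $(\kappa-S_{11})(\kappa+\frac{1}{\rho})^{2}$, and the principal curvatures — the eigenvalues of $S$ — are precisely $S_{11}$, $-\frac{1}{\rho}$ and $-\frac{1}{\rho}$. This already delivers $\kappa_{1}=\kappa_{2}=-\frac{1}{\rho}$.

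Equivalently, and in line with the computation displayed in (\ref{29''}), one expands $\det(S-\kappa I_{3})$ directly; the factorization there exhibits $(\kappa\rho+1)^{2}$ as a factor, so $\kappa=-\frac{1}{\rho}$ is a root of multiplicity two, while setting the remaining factor — which is linear in $\kappa$ — equal to zero and solving for $\kappa$ produces the third principal curvature. Either route reaches the same point: only the identification of $\kappa_{3}$ remains to be done.

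To show $\kappa_{3}=K\rho^{2}$, the cleanest argument uses that the product of the principal curvatures equals $\det S=K$ by (\ref{yy4}); since $\kappa_{1}\kappa_{2}=\frac{1}{\rho^{2}}$, this forces $\kappa_{3}=K\rho^{2}$. Alternatively, one checks by hand that the remaining eigenvalue $S_{11}$ from (\ref{23}) already equals $\rho^{2}K$: recalling $Q=\rho\left(k_{1}\sqrt{1-\rho'^{2}}\cos\upsilon_{2}\cos\upsilon_{3}+\rho''\right)-1+\rho'^{2}$ and comparing the numerator of $S_{11}$ with the numerator of $K$ in (\ref{gauss}) — both being the same combination of $k_{1}$, $\rho$, $\rho'$, $\rho''$, $\cos\upsilon_{2}$, $\cos\upsilon_{3}$ divided by $Q^{2}$ — one sees $S_{11}=\rho^{2}K$. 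One may also cross-check the outcome against (\ref{24}): indeed $3H=\operatorname{tr}(S)=S_{11}+S_{22}+S_{33}=K\rho^{2}-\frac{2}{\rho}$, which is exactly $\kappa_{1}+\kappa_{2}+\kappa_{3}$.

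The only real obstacle is bookkeeping: one must verify that solving the linear factor in (\ref{29''}) (or evaluating $S_{11}$) genuinely reproduces the expression $\rho^{2}K$ of (\ref{gauss}); this is a short but mildly tedious algebraic identification with no conceptual content, since both sides are assembled from the same data and denominator $Q^{2}$. It is also worth noting that the root $\kappa=-\frac{1}{\rho}$ has multiplicity exactly two away from the degenerate locus where $S_{11}=-\frac{1}{\rho}$ (equivalently $K\rho^{3}=-1$), so in the generic situation the principal curvatures are $-\frac{1}{\rho}$, $-\frac{1}{\rho}$, $K\rho^{2}$ as asserted.
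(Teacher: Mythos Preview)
Your proposal is correct and follows essentially the same route as the paper: both extract the principal curvatures as eigenvalues of the shape operator $S$ in (\ref{23}), the paper via the explicit characteristic polynomial (\ref{29''}) and you via the cleaner observation that $S$ is lower triangular so its eigenvalues are its diagonal entries. Your identification $\kappa_{3}=K\rho^{2}$ through $K=\det S=\kappa_{1}\kappa_{2}\kappa_{3}$ is a mild streamlining of solving the residual linear factor in (\ref{29''}), but the substance is identical.
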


Here, we know that if $\rho(\upsilon_{1})=\lambda$ is a constant, then the
canal hypersurface is called tubular or pipe hypersurface and from
(\ref{n-canal}) the tubular hypersurface in $E^{n}$ can be given by%

\begin{equation}
X(\upsilon_{1},\upsilon_{2},\upsilon_{3},...,\upsilon_{n-1})=\alpha
(\upsilon_{1})\pm\lambda\left[
\begin{array}
[c]{l}%
\left(  \prod\limits_{k=2}^{n-1}\cos(x_{k})\right)  F_{2}(\upsilon_{1})\\
+%
%TCIMACRO{\dsum \limits_{i=3}^{n-1}}%
%BeginExpansion
{\displaystyle\sum\limits_{i=3}^{n-1}}
%EndExpansion
\left(  \sin(\upsilon_{n+1-i})\prod\limits_{k=n+2-i}^{n-1}\cos(x_{k})\right)
F_{i}(\upsilon_{1})\\
+\sin(\upsilon_{n-1})F_{n}(\upsilon_{1})
\end{array}
\right]  . \label{25}%
\end{equation}
So, from (\ref{25}) the tubular hypersurface in $E^{4}$ is%

\begin{equation}
\emph{T}(\upsilon_{1},\upsilon_{2},\upsilon_{3})=\alpha(\upsilon_{1}%
)\pm\lambda\left[  \cos\upsilon_{2}\cos\upsilon_{3}F_{2}(\upsilon_{1}%
)+\sin\upsilon_{2}\cos\upsilon_{3}F_{3}(\upsilon_{1})+\sin\upsilon_{3}%
F_{4}(\upsilon_{1})\right]  . \label{26}%
\end{equation}
Here, by taking "$\pm$" as "$+$" in (\ref{26}), we get

\begin{theorem}
The Gaussian and mean curvatures of the tubular hypersurface (\ref{26}) in
Euclidean 4-space are%
\begin{equation}
K=\frac{k_{1}\cos\upsilon_{2}\cos\upsilon_{3}}{\lambda^{2}(1-k_{1}\lambda
\cos\upsilon_{2}\cos\upsilon_{3})} \label{27}%
\end{equation}
and%
\begin{equation}
H=\frac{2-3k_{1}\lambda\cos\upsilon_{2}\cos\upsilon_{3}}{3\lambda
(-1+k_{1}\lambda\cos\upsilon_{2}\cos\upsilon_{3})}, \label{28}%
\end{equation}
respectively.
\end{theorem}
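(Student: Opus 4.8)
The plan is to recognise the tubular hypersurface (\ref{26}) as the specialisation $\rho(\upsilon_1)\equiv\lambda$ of the canal hypersurface (\ref{4-canal}), and then simply read off the curvatures from the formulas already established for the canal case, rather than recomputing the first and second fundamental forms of (\ref{26}) from scratch. With $\rho=\lambda$ constant one has $\rho'=\rho''=0$; the admissibility condition $\lvert\dot r\rvert<\lVert\dot\alpha\rVert$ degenerates harmlessly to $0<1$, and, crucially, $\sqrt{1-\rho'^2}=1$ stays bounded away from $0$, so every denominator occurring in the derivation of (\ref{gauss}) and (\ref{mean}) remains meaningful. Consequently the Gaussian and mean curvature formulas for (\ref{4-canal}) apply verbatim after the substitution (the choice of sign in ``$\pm$'' being irrelevant, exactly as in the canal case).

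First I would put $\rho=\lambda$, $\rho'=0$, $\rho''=0$ into (\ref{gauss}). Every term carrying a factor $\rho'$, $\rho''$ or $\rho'^2$ drops out, so the numerator becomes $k_1\cos\upsilon_2\cos\upsilon_3-\lambda k_1^2\cos^2\upsilon_2\cos^2\upsilon_3=k_1\cos\upsilon_2\cos\upsilon_3\,(1-\lambda k_1\cos\upsilon_2\cos\upsilon_3)$ and the denominator becomes $\lambda^2(\lambda k_1\cos\upsilon_2\cos\upsilon_3-1)^2=\lambda^2(1-\lambda k_1\cos\upsilon_2\cos\upsilon_3)^2$; cancelling one factor $(1-\lambda k_1\cos\upsilon_2\cos\upsilon_3)$ gives precisely (\ref{27}).

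Next I would carry out the same substitution in (\ref{mean}). Abbreviating $t:=k_1\lambda\cos\upsilon_2\cos\upsilon_3$, the numerator collapses to $-3t^2+5t-2$ and the denominator to $3\lambda(t-1)^2$. The one step that needs to be seen rather than mechanically computed is the factorisation $-3t^2+5t-2=-(3t-2)(t-1)$: it cancels a factor $(t-1)$ against the square in the denominator and leaves $H=(2-3t)/\bigl(3\lambda(t-1)\bigr)$, which is exactly (\ref{28}). As a consistency check, the resulting pair (\ref{27})--(\ref{28}) satisfies the relation $3H\rho=K\rho^3-2$ of (\ref{24}) with $\rho=\lambda$, so no independent verification of that identity is required in the tubular case. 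Thus the only genuine ``obstacle'' here is bookkeeping in the algebraic simplification; there is no conceptual difficulty once the tubular hypersurface is viewed as the constant-radius canal hypersurface.
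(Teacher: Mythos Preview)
Your proposal is correct and follows exactly the approach implicit in the paper: the authors state the theorem immediately after introducing (\ref{26}) as the constant-radius specialisation of (\ref{4-canal}), offering no separate computation, so the intended proof is precisely the substitution $\rho=\lambda$, $\rho'=\rho''=0$ into (\ref{gauss}) and (\ref{mean}) that you carry out. Your algebra, including the factorisation $-3t^{2}+5t-2=(2-3t)(t-1)$, is accurate.
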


So, from (\ref{27}) and (\ref{28}) we get

\begin{theorem}
The tubular hypersurface (\ref{26}) in Euclidean 4-space is $(H,K)_{\{12\}}$
and $(H,K)_{\{13\}}$- Weingarten hypersurface.
\end{theorem}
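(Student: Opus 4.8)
The plan is to exploit the very special structure of the curvature formulas \eqref{27} and \eqref{28}: for the tubular hypersurface both $K$ and $H$ depend on the three parameters $\upsilon_{1},\upsilon_{2},\upsilon_{3}$ only through the single scalar quantity $t:=k_{1}(\upsilon_{1})\cos\upsilon_{2}\cos\upsilon_{3}$. Indeed, substituting $t=k_{1}\cos\upsilon_{2}\cos\upsilon_{3}$ into \eqref{27}--\eqref{28} and recalling that $\lambda$ is a nonzero constant, one may rewrite them as
\[
K=\frac{t}{\lambda^{2}(1-\lambda t)},\qquad H=\frac{2-3\lambda t}{3\lambda(\lambda t-1)},
\]
so that $K=K(t)$ and $H=H(t)$ are functions of $t$ alone. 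This factorization is the whole content of the theorem; everything else is the chain rule.

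Next I would differentiate. Writing $t_{\upsilon_{1}}=k_{1}'(\upsilon_{1})\cos\upsilon_{2}\cos\upsilon_{3}$, $t_{\upsilon_{2}}=-k_{1}\sin\upsilon_{2}\cos\upsilon_{3}$ and $t_{\upsilon_{3}}=-k_{1}\cos\upsilon_{2}\sin\upsilon_{3}$, the chain rule gives
\[
K_{\upsilon_{i}}=\frac{dK}{dt}\,t_{\upsilon_{i}},\qquad H_{\upsilon_{i}}=\frac{dH}{dt}\,t_{\upsilon_{i}},\qquad i=1,2,3 .
\]
Consequently, for any pair of indices $i\neq j$,
\[
H_{\upsilon_{i}}K_{\upsilon_{j}}-H_{\upsilon_{j}}K_{\upsilon_{i}}=\frac{dH}{dt}\frac{dK}{dt}\bigl(t_{\upsilon_{i}}t_{\upsilon_{j}}-t_{\upsilon_{j}}t_{\upsilon_{i}}\bigr)=0 .
\]
In particular this vanishes for $(i,j)=(1,2)$ and for $(i,j)=(1,3)$, which is exactly the Weingarten condition \eqref{w1} for the index pairs $\{12\}$ and $\{13\}$; hence the tubular hypersurface \eqref{26} is both $(H,K)_{\{12\}}$- and $(H,K)_{\{13\}}$-Weingarten. (The same argument of course also recovers the $\{23\}$ case, in accordance with the canal-hypersurface result above, since $t$ is the only combination of $\upsilon_{2},\upsilon_{3}$ entering $H$ and $K$.)

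There is essentially no real obstacle: once one observes the factorization through $t$, the Weingarten determinant is forced to vanish because a $2$-form pulled back along a map into a one-dimensional space is identically zero. If a purely computational proof is preferred, the same identity follows by inserting the explicit $\upsilon_{i}$-derivatives of \eqref{27} and \eqref{28} directly into \eqref{w1} and checking the cancellation of the common factors; the only care required is the standard nondegeneracy hypothesis $1-k_{1}\lambda\cos\upsilon_{2}\cos\upsilon_{3}\neq 0$ (equivalently $\det[g_{ij}]\neq 0$), which guarantees that $K$, $H$ and their $t$-derivatives are well defined on the region considered.
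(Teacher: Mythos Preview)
Your argument is correct and is essentially the approach the paper intends: the paper simply records that the theorem follows ``from \eqref{27} and \eqref{28}'' without writing out the computation, and your observation that both $H$ and $K$ factor through the single scalar $t=k_{1}\cos\upsilon_{2}\cos\upsilon_{3}$ is precisely the clean way to make that implicit step explicit. Your chain-rule calculation fills in what the paper leaves to the reader.
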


Also, we know that, a hypersurface is called a linear Weingarten hypersurface,
if it satisfies
\begin{equation}
aH+bK=c, \label{31}%
\end{equation}
where $a,b,c$ are not all zero constants. Thus, from (\ref{24}), we have

\begin{theorem}
The tubular hypersurface (\ref{26}) in Euclidean 4-space is a linear
Weingarten hypersurface.
\end{theorem}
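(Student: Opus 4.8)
The plan is to recognize that the tubular hypersurface (\ref{26}) is precisely the canal hypersurface (\ref{4-canal}) in the special case where the radius function is a constant, $\rho(\upsilon_{1})=\lambda$, so that $\rho^{\prime}=\rho^{\prime\prime}=0$. Consequently the curvature relation (\ref{24}), established for arbitrary canal hypersurfaces in $E^{4}$, applies directly with $\rho=\lambda$, giving
\[
H=\frac{1}{3}\Bigl(\lambda^{2}K-\frac{2}{\lambda}\Bigr),
\]
which, after clearing the constant $\lambda$, is equivalent to
\[
3H-\lambda^{2}K=-\frac{2}{\lambda}.
\]

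The next step is simply to read off the constants: putting $a=3$, $b=-\lambda^{2}$, $c=-\frac{2}{\lambda}$, the displayed identity has the form $aH+bK=c$ with $a,b,c$ constants that are not all zero (in fact $a=3\neq0$, and since $\lambda>0$ also $b$ and $c$ are nonzero). By the definition recalled in (\ref{31}) this means the tubular hypersurface (\ref{26}) is a linear Weingarten hypersurface, which is the assertion.

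As an independent check — useful for a reader who prefers not to invoke (\ref{24}) — one can substitute the explicit expressions (\ref{27}) and (\ref{28}) directly. Writing $t:=k_{1}\lambda\cos\upsilon_{2}\cos\upsilon_{3}$, one computes $\lambda^{2}K=\dfrac{t}{\lambda(1-t)}$ and $3H=\dfrac{3t-2}{\lambda(1-t)}$, so that $3H-\lambda^{2}K=\dfrac{2(t-1)}{\lambda(1-t)}=-\dfrac{2}{\lambda}$, recovering the same relation. There is no genuine obstacle in the argument; the only point worth stressing is the legitimacy of the constant-radius specialization of (\ref{24}), which is immediate since that relation was derived without any use of $\rho^{\prime}\neq0$ beyond terms that simply vanish when $\rho$ is constant.
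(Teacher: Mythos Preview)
Your argument is correct and follows essentially the same route as the paper: specialize the general relation (\ref{24}) to the constant-radius case $\rho=\lambda$ and read off a linear relation $aH+bK=c$ with constant coefficients (the paper writes it as $-3\lambda H+\lambda^{3}K=2$, which is your identity multiplied through by $-\lambda$). Your additional direct verification from (\ref{27}) and (\ref{28}) is a nice consistency check but is not needed for the proof.
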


\begin{proof}
The equation (\ref{24}) on the tubular hypersurface (\ref{26}) implies
\begin{equation}
-3\lambda H+\lambda^{3}K=2. \label{32}%
\end{equation}
So, from the definition of a linear Weingarten hypersurface, the proof completes.
\end{proof}

\section{Conclusion and Future Work}

In this study, firstly we obtain the general expression of canal hypersurfaces
in Euclidean $n$-space and we deal with canal hypersurfaces in Euclidean
4-space with the aid of this expression. In this sense, we obtain the Gaussian
curvature and the mean curvature of canal hypersurfaces in $E^{4}$ and give an
important relation between the Gaussian and mean curvatures. Also by taking
the radius function as a constant, we state the tubular hypersurfaces in
Euclidean spaces and give some results about tubular hypersurfaces in $E^{4}$.
In this context, we prove that the tubular hypersurfaces are linear Weingarten
hypersurfaces in $E^{4}$.

We hope that, this study will give a new perspective to readers who deal with
the canal hypersurfaces in $E^{4}$ and $E^{n}$. As open problems, this study
can be handled in Minkowskian, Galilean and pseudo-Galilean 4-spaces in the
near future. Also, some important characterizations for the Laplace-Beltrami
operators on the canal hypersurfaces or different classifications for canal
hypersurfaces in different 4-dimensional spaces can be investigated.

\bigskip


\begin{thebibliography}{99}                                                                                               %


\bibitem {Aslan}S. Aslan and Y. Yayl\i; \textit{Canal Surfaces with
Quaternions}, Adv. Appl. Clifford Algebr., 26, (2016), 31-38.

\bibitem {Chen}B-Y. Chen and K. Yano; \textit{Special Conformally Flat Spaces
and Canal Hypersurfaces}, Tohoku Math. J., 25, (1973), 177-184.

\bibitem {Cheng}B. Cheng; \textit{Frenet Formulas in n-Dimensions and Some
Applications,} Pi Mu Epsilon Journal, 7(10), (1984), 629-635.

\bibitem {Garcia}R. Garcia, J. Llibre and J. Sotomayor;\textit{ Lines of
Principal Curvature on Canal Surfaces in }$%
%TCIMACRO{\U{211d} }%
%BeginExpansion
\mathbb{R}
%EndExpansion
^{3}$, An. Acad. Brasil. Cienc., 78(3), (2006), 405-415.

\bibitem {Gray}A. Gray; Modern Differential Geometry of Curves and Surfaces
with Mathematica, 2nd edn. CRC Press, Boca Raton, (1999).

\bibitem {Guler}E. G\"{u}ler, H.H. Hac\i saliho\u{g}lu and Y.H. Kim;
\textit{The Gauss map and the third Laplace-Beltrami operator of the
rotational hypersurface in 4-Space}, Symmetry, 10(9), (2018), 1-11.

\bibitem {Hartman}E. Hartman; Geometry and Algorithms for Computer Aided
Design, Dept. of Math. Darmstadt Univ. of Technology, 2003.

\bibitem {Izumiya}S. Izumiya and M .Takahashi; \textit{On caustics of
submanifolds and canal hypersurfaces in Euclidean space, }Topology Appl., 159,
(2012), 501-508.

\bibitem {Karacan}M.K. Karacan, H. Es and Y. Yayl\i; \textit{Singular Points
of Tubular Surfaces in Minkowski 3-Space}, Sarajevo J. Math., 2(14), (2006), 73-82.

\bibitem {Karacan2}M.K. Karacan and Y. Tuncer; \textit{Tubular Surfaces of
Weingarten Types in Galilean and Pseudo-Galilean}, Bull. Math. Anal. Appl.,
5(2), (2013), 87-100.

\bibitem {Karacan3}M.K. Karacan, D.W. Yoon and Y. Tuncer; \textit{Tubular
Surfaces of Weingarten Types in Minkowski 3-Space, }Gen. Math. Notes, 22(1),
(2014), 44-56.

\bibitem {Kim}Y.H. Kim, H. Liu and J. Qian; \textit{Some Characterizations of
Canal Surfaces}, Bull. Korean Math. Soc., 53(2), (2016), 461-477.

\bibitem {Krivos}S.N. Krivoshapko and C.A.B. Hyeng; \textit{Classification of
Cyclic Surfaces and Geometrical Research of Canal Surfaces}, International
Journal of Research and Reviews in Applied Sciences, 12(3), (2012), 360-374.

\bibitem {Kucuk}Z. K\"{u}\c{c}\"{u}karslan Y\"{u}zba\c{s}\i\ and D.W. Yoon;
\textit{Tubular Surfaces with Galilean Darboux Frame in }$G_{3}$, Journal of
Mathematical Physics, Analysis, Geometry, 15(2), (2019), 278--287.

\bibitem {Maekawa}T. Maekawa, N.M. Patrikalakis, T. Sakkalis and G. Yu;
\textit{Analysis and Applications of Pipe Surfaces}, Comput. Aided Geom.
Design, 15, (1998), 437-458.

\bibitem {Peter}M. Peternell and H. Pottmann; \textit{Computing Rational
Parametrizations of Canal Surfaces}, J. Symbolic Comput., 23, (1997), 255-266.

\bibitem {Pinl}M. Pinl and W. Ziller; \textit{Minimal hypersurfaces in spaces
of constant curvature}, Journal of Differential Geometry, 11, (1976), 335-343.

\bibitem {Ro}J.S. Ro and D.W. Yoon; \textit{Tubes of Weingarten Type in a
Euclidean 3-Space}, Journal of the Chungcheong Mathematical Society, 22(3),
(2009), 359-366.

\bibitem {Ucum}A. U\c{c}um and K. \.{I}larslan; \textit{New Types of Canal
Surfaces in Minkowski 3-Space, }Adv. Appl. Clifford Algebr., 26, (2016), 449-468.

\bibitem {Xu}Z. Xu, R. Feng and J-G. Sun; \textit{Analytic and Algebraic
Properties of Canal Surfaces}, J. Comput. Appl. Math., 195, (2006), 220-228.
\end{thebibliography}
\end{document}